 \newtheorem{theorem}{{\rm T\sc heorem}}[section]
 \newtheorem{lemma}[theorem]{{\rm L\sc emma}}
 \newtheorem{corollary}[theorem]{{\rm C\sc orollary}}
 \newtheorem{proposition}[theorem]{{\rm P\sc roposition}}
 \newtheorem{definition}[theorem]{{\rm D\sc efinition}}
 \newtheorem{example}[theorem]{\rm E\sc xample}
 \theoremstyle{definition}
 \newtheorem{remark}[theorem]{{\rm R\sc emark}}
  \newtheorem*{acks}{Acknowledgments}
\DeclareMathOperator{\aut}{Aut}
\DeclareMathOperator{\pic}{Pic}
\DeclareMathOperator{\bl}{Bl}
\DeclareMathOperator{\mw}{MW}
\DeclareMathOperator{\ns}{NS}
\DeclareMathOperator{\numer}{Num}
\begin{document}
 \title {Nodal rational curves on Enriques surfaces of base change type}
 \author[Simone Pesatori]{Simone Pesatori}
 \address{Università degli Studi Roma Tre\\
         Largo San Leonardo Murialdo, 1\\
         Rome\\
         Italy}

 \begin{abstract} Using lattice theory, Hulek and Sch\"utt proved that for every $m\in\mathbb{Z}_+$ there exists a nine-dimensional family $\mathcal{F}_m$ of K3 surfaces covering Enriques surfaces having an elliptic pencil with a rational bisection of arithmetic genus $m$. We present a purely geometrical lattice free construction of these surfaces, that allows us to prove that generically the mentioned bisections are nodal. Moreover, we show that, for every $m\in\mathbb{Z}_+$, the very general Enriques surface covered by a K3 surface in $\mathcal{F}_m$ admits a countable set of nodal rational curves of arithmetic genus $(4k^2-4k+1)m+4k^2-4k$ for every $k\in\mathbb{Z}_+$, that form a rank 8 subgroup of the automorphism group of the surface. As an application, we compute the linear class of the $n$-torsion multisection for every $n\in\mathbb{N}$ for a general rational elliptic surface. 

\end{abstract}
 \maketitle
 \pagestyle{myheadings}\markboth{\textsc{ Simone Pesatori }}{\textsc{Nodal rational curves on Enriques surfaces of base change type}}
\section{Introduction}

This work addresses the topic of nodal rational curves on Enriques surfaces of base change type. The interest of the results is two-fold: from one side it is a contribution to the study of rational curves on algebraic varieties; on the other hand it is linked to the study of Severi varieties of nodal curves on surfaces, where by nodal we mean that the curves have just simple nodes as singularities.\par
Concerning rational curves, it is well-known that the only curves with negative self-intersection that an Enriques surface can admit are the smooth $(-2)$-curves, which turn out to be rational. The Enriques surfaces admitting such curves are called \textit{nodal} (or \textit{special}) and they form a divisor in the ten-dimensional moduli space of Enriques surfaces, whose geometry has been studied by Dolgachev and Kondo in \cite{DK}, where the authors prove its rationality. It is classical that every Enriques surface is equipped with some genus 1 fibrations over a base isomorphic to $\mathbb{P}^1$, and that their singular fibers are generically genus 1 curves with a node, whence rational. Concerning higher genera, Hulek and Sch\"utt in \cite{HS} show that for every positive integer $m\in\mathbb{Z}_+$ there exist a nine-dimensional family $\mathcal{F}_m$ in the moduli space of K3 surfaces parametrizing K3's covering an Enriques surface admitting a rational curve of arithmetic genus $m$, called Enriques surfaces of base change type (or \textit{$m$-special} Enriques surfaces). More precisely, they prove that every Enriques surface covered by a K3 surface in $\mathcal{F}_m$ has an elliptic pencil with a rational bisection of arithmetic genus $m$ that splits in the K3 cover in two smooth $(-2)$-curves. Even regarding higher genera, Galati and Knutsen in \cite{GK} state a necessary condition for the existence of a rational curve in the very general Enriques surface: they prove that every rational curve in the very general Enriques surface $Y$ is 2-divisible in $\numer(Y)$. \par
The Severi varieties parametrize curves with a prescribed number of nodes in a fixed linear system. Severi varieties were introduced by Severi in \cite{Se}, where he proved that all Severi varieties of irreducible $\delta$-nodal curves of degree $d$ in $\mathbb{P}^2$ are nonempty and smooth of the expected dimension. Severi also claimed irreducibility of such varieties, but his proof contains a gap. The irreducibility was proved by Harris in \cite{H}. Severi varieties on other surfaces have received much attention in recent years, especially in connection with enumerative formulas computing their degrees (see for example \cite{B}, \cite{BOP} and \cite{CH}. Nonemptiness, smoothness, dimension and irreducibility for Severi varieties have been widely investigated on various rational surfaces (see, e.g.,
\cite{GLS}, \cite{Ta} and \cite{Te}, as well as K3, Enriques and abelian surfaces (see, e.g.,\cite{Che}, \cite{KL}, \cite{KLM}, \cite{LS}, \cite{BL}, \cite{MM}, \cite{CDGK} and \cite{CDGK2}). \par
In this work we link the two topics of rational curves on surfaces and of Severi varieties of curves on surfaces. First of all, we show that the rational bisections found by Hulek and Sch\"utt in \cite{HS} are generically nodal. In particular, we prove the following Theorem.
\begin{theorem}\label{nodal}
    Let $Y_m$ be a general $m$-special Enriques surface and let $B_{Y,m}\subset Y_m$ be an $m$-special curve. Then, $B_{Y,m}$ is nodal.
\end{theorem}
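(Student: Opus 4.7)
The plan is to pass to the K3 double cover $\pi \colon X_m \to Y_m$. By hypothesis $\pi^{-1}(B_{Y,m}) = C_1 \cup C_2$, where $C_1, C_2$ are smooth $(-2)$-curves interchanged by the Enriques involution $\sigma$. Since $\pi$ is étale and $\sigma$ acts freely, the singular points of $B_{Y,m}$ are in bijection with the $\sigma$-orbits of points of $C_1 \cap C_2$, and the local analytic type of $B_{Y,m}$ at such a point coincides with that of $C_1 \cup C_2$ at either preimage. In particular, $B_{Y,m}$ is nodal if and only if $C_1$ meets $C_2$ transversally in distinct points.

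Next, I would compute the intersection number. Since $K_{Y_m}$ is numerically trivial, adjunction gives $B_{Y,m}^2 = 2p_a(B_{Y,m}) - 2 = 2m - 2$. Pulling back along the étale degree-$2$ cover yields $(C_1 + C_2)^2 = 2 B_{Y,m}^2 = 4m - 4$, and combined with $C_i^2 = -2$ this forces $C_1 \cdot C_2 = 2m$. Thus, if the intersection is transversal and reduced, it consists of $2m$ distinct points forming $m$ orbits under $\sigma$, so $B_{Y,m}$ acquires exactly $m$ nodes; since each node contributes $1$ to the $\delta$-invariant, this saturates $p_a(B_{Y,m}) - p_g(B_{Y,m}) = m$, and nodality of $B_{Y,m}$ follows.

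It remains to establish the transversality for a general $Y_m \in \mathcal{F}_m$. Failure of transversality — a tangency between $C_1$ and $C_2$, or a coincidence of two intersection points — is a closed condition on the family, so by upper semicontinuity it suffices to exhibit a single K3 cover in $\mathcal{F}_m$ for which the $2m$ intersection points are simple. I would do this using the explicit geometric construction of $\mathcal{F}_m$ developed in the preceding sections, which presents $C_1$ and $C_2$ as strict transforms of concrete rational curves in a known model of $X_m$; this reduces the problem to a finite verification at one sufficiently generic parameter. The principal obstacle lies precisely in this verification: the construction has to be concrete enough to allow an explicit parametrization of $C_1$ and $C_2$, so that transversality at each of the $2m$ intersection points can be checked, e.g.\ by the non-vanishing of a resultant or by computing the Jacobian of the defining equations. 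Once this is in place, semicontinuity propagates transversality to the whole Zariski-open set of general $Y_m$.
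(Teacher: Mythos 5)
Your reduction is sound and agrees with the paper's framework: $B_{Y,m}$ is nodal if and only if the two smooth rational curves $C_1=P$ and $C_2=\tilde E_9$ upstairs meet transversally in $2m$ distinct points, and the \'etale quotient then produces exactly $m$ nodes. Up to that point there is nothing to object to.

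The problem is that everything after that point --- which is the entire mathematical content of the theorem --- is deferred rather than proved. You reduce to ``exhibit a single K3 cover in $\mathcal{F}_m$ for which the $2m$ intersection points are simple'' and then say you would do this by an explicit parametrization and a resultant computation, while conceding that this verification is ``the principal obstacle.'' No such explicit model of $X_m$ with equations for $P$ and $\tilde E_9$ is available (the paper never produces one), and for arbitrary $m$ a case-by-case computation is not a viable strategy; so as written the proposal has a genuine gap precisely at the step the theorem is about. The paper closes this gap by a completely different mechanism: it pushes $P$ down to an origin cutting rational bisection $B_{S,m}\in|\mathcal{B}_m|$ on the rational elliptic surface $S$, observes that a tangency of $P$ with $\tilde E_9$ would force a tacnode of $B_{S,m}$ along the zero-section $E_9$, and then descends once more to $\mathbb{F}_2=S/(-1)$, where $B_{S,m}$ maps to a \emph{smooth} section in $|2(m+1)f+e|$ whose contacts with the branch curve $e+h_2$ encode the singularities. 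Nonemptiness of the relevant logarithmic Severi variety $V_{b_m}^{h_2}(\mathbb{F}_2)$ is obtained from an explicit section $E_m\in\mw(S)$ with $E_m\cdot E_9=m$ (Costa), and Dedieu's theorem (the inequality $-K_{\mathbb{F}_2}\cdot c-\deg D=2$ feeding into part (ii-b)) guarantees that for the \emph{general} member the tangency points are pairwise distinct and simple; this is what rules out tacnodes and yields transversality of $P$ and $\tilde E_9$. Note also that the semicontinuity you invoke is run in the paper along the one-dimensional family $V_{\mathcal{B}_m}^{\mathcal{H}_2}(S)$ of bisections (plus the $8$ moduli of $S$), not directly over $\mathcal{F}_m$; you would need to set up such a family anyway to make ``general $Y_m$'' meaningful. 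In short: correct skeleton, but the load-bearing step is missing, and the tool that makes it work (logarithmic Severi varieties on $\mathbb{F}_2$ and Dedieu's genericity theorem) does not appear in your proposal.
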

Furthermore, we see that fixed a general $m$-special Enriques surface $Y_m$, there are several nodal rational curves for an infinite range of arithmetic genera. The precise statement is described in the following Theorem.
\begin{theorem}\label{nodalk}
    For every $m\in\mathbb{Z}_+$, the very general $m$-special Enriques surface admits a nodal rational curve of arithmetic genus 
     \begin{center}
        $\rho_a(B_{E,k,m})=(4k^2-4k+1)m+4k^2-4k$
    \end{center}
 for every $k\in\mathbb{Z}_+$.
\end{theorem}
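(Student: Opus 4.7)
The plan is to pull back to the K3 double cover $\pi\colon X_m\to Y_m$ and exploit Mordell-Weil translations of the induced elliptic fibration to produce $\sigma$-invariant multisections on $X_m$ that descend to the required nodal rational curves. Let $\sigma$ denote the Enriques involution. The elliptic pencil on $Y_m$ having $B_{Y,m}$ as bisection lifts to a Jacobian elliptic fibration on $X_m$ in which $B_{Y,m}$ splits as two sections $B_1,B_2$ swapped by $\sigma$, with $B_1\cdot B_2=2m$. I set $B_1=O$ as the zero section and $t:=[B_2]-[O]\in\mw(X_m)$; for the very general member of $\mathcal{F}_m$ the Picard rank is the expected minimal one, so $t$ has infinite order.

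For every $k\in\mathbb{Z}_+$ let $s_k$ be the section obtained by iterating the translation by $t$, so that $s_0=B_1$ and $s_1=B_2$. Using $\sigma^{*}F=F$, $\sigma^{*}[O]=[B_2]$, and $\sigma^2=\mathrm{id}$ one deduces $\sigma^{*}[t]=-[t]$, whence $\sigma$ acts on Mordell-Weil as $s\mapsto t-s$ and $\sigma(s_k)=s_{1-k}$. The $\sigma$-invariant divisor $D_k:=s_k+s_{1-k}$ descends to an irreducible curve $\bar{D}_k\subset Y_m$ with $\pi^{*}\bar{D}_k=D_k$. The restriction $\pi|_{s_k}\colon s_k\to\bar{D}_k$ is surjective with $s_k\cong\mathbb{P}^1$, and its only failures of injectivity occur at pairs $\{p,\sigma(p)\}\subset s_k\cap s_{1-k}$; hence $\pi|_{s_k}$ is birational and realizes the normalization, so $\bar{D}_k$ is an irreducible rational curve.

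A direct computation (equivalently, the Shioda height formula) combined with $s_k\cdot F=1$ and $s_k^2=-2$ pins down
\begin{equation*}
[s_k]=[O]+k([B_2]-[O])+2k(k-1)(m+1)F \quad\text{in }\ns(X_m)\otimes\mathbb{Q}.
\end{equation*}
Using $[O]^2=[B_2]^2=-2$, $[O]\cdot[B_2]=2m$ and $[O]\cdot F=[B_2]\cdot F=1$, one then obtains $s_k\cdot s_{1-k}=2m+8k(k-1)(m+1)$. Since $\pi$ is an \'etale double cover, $\bar{D}_k^2=D_k^2/2=s_k\cdot s_{1-k}-2$, and adjunction on $Y_m$ (where $K_{Y_m}$ is numerically trivial) yields
\begin{equation*}
p_a(\bar{D}_k)=1+\tfrac{1}{2}\bar{D}_k^2=\tfrac{1}{2}\,s_k\cdot s_{1-k}=m+4k(k-1)(m+1)=(4k^2-4k+1)m+4k^2-4k,
\end{equation*}
exactly the claimed arithmetic genus.

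For the nodality, since $\sigma$ acts freely on $X_m$ the intersection set $s_k\cap s_{1-k}$ breaks into $\sigma$-orbits of size two, and each orbit $\{p,\sigma(p)\}$ maps to a single singular point of $\bar{D}_k$. Provided $s_k$ and $s_{1-k}$ meet transversally at $p$, the two branches of $\bar{D}_k$ at $\pi(p)$ have distinct tangent lines (because $d\pi_p$ is an isomorphism), producing a simple node; the resulting $\tfrac{1}{2}\,s_k\cdot s_{1-k}=p_a(\bar{D}_k)$ nodes saturate the delta invariant and force every singularity to be of this form. The hard part is therefore to secure transversality of $s_k\cap s_{1-k}$ simultaneously for all $k$ on the very general $Y_m$: for each fixed $k$ a tangential intersection imposes a proper closed condition on $\mathcal{F}_m$, whose properness I would verify by exhibiting one $m$-special Enriques surface (via the explicit base-change construction of the earlier sections) on which $s_k$ and $s_{1-k}$ meet transversally. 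Excluding the resulting countable union of proper closed subloci leaves the very general member of $\mathcal{F}_m$ for which all $\bar{D}_k$ are simultaneously nodal rational curves of the claimed genera.
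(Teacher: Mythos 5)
Your setup and genus computation are sound and essentially reproduce the paper's Lemma 5.9 and Proposition 5.11 by a different route: you pin down $[s_k]$ in $\ns(X_m)$ via the height/Shioda-type formula and compute $s_k\cdot s_{1-k}=2m+8k(k-1)(m+1)$ directly, whereas the paper identifies $P^{\boxplus k}\cdot P^{\boxplus(1-k)}$ with $P^{\boxplus(2k-1)}\cdot\tilde{E}_9=P\cdot\tilde E_9+P\cdot\tilde{\mathcal H}_{2k-1}$ and plugs in the class of the $(2k-1)$-torsion multisection; both give $(4k^2-4k+1)m+4k^2-4k$. Your description of how $\sigma$-orbits in $s_k\cap s_{1-k}$ become singular points of $\bar D_k$, and the count showing that transverse intersections would saturate the $\delta$-invariant with simple nodes, are also correct.

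The genuine gap is the transversality itself, which you explicitly defer (``whose properness I would verify by exhibiting one $m$-special Enriques surface \dots on which $s_k$ and $s_{1-k}$ meet transversally''). This is not a routine genericity remark: it is the entire content of the theorem beyond Hulek--Sch\"utt, and properness of the ``tangential intersection'' locus in $\mathcal F_m$ is exactly what must be proved --- a priori the condition could hold identically (indeed, related tangencies \emph{are} forced in this construction, e.g.\ $B_{S,m}$ is necessarily tangent to the two branch fibers, and $P^{\boxplus k}$ for $k$ even necessarily meets $\tilde E_9$ over the fixed fibers). The paper closes this gap by descending: the intersection $s_k\cap s_{1-k}$ corresponds to $P^{\boxplus(2k-1)}\cap\tilde E_9$, hence to $B_{S,m}\cap(E_9+\mathcal H_{2k-1})$ on the rational elliptic surface $S$, hence to the intersection of a member of the logarithmic Severi variety $V^{h_2}_{b_m}(\mathbb F_2)$ with the fixed curve $h_{2k-1}$ on $\mathbb F_2$; Dedieu's Theorem 5.6(ii-c) then guarantees that the general member of that one-dimensional family meets any fixed curve transversely, and letting $k$ range over $\mathbb Z_+$ removes only countably many members (whence ``very general''). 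Without this reduction, or an actual exhibited example with transversality for every $k$, your argument does not establish nodality.
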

Lastly, we explain that fixed the genus 1 pencil $\epsilon:Y_m\rightarrow\mathbb{P}^1$ of which all these rational curves are bisections, for every $k\in\mathbb{Z}_+$, there is a one to one correspondence between the set of nodal rational bisection of arithmetic genus $(4k^2-4k+1)m+4k^2-4k$ and the group of sections $\mw(S)$ of a certain rational elliptic surface $S$ involved in the construction of $Y_m$. Since by a result due to Hulek and Sch\"utt, the group $\mw(S)$ injects into $\aut(Y_m)$, we conclude that the nodal rational bisections of a given arithmetic genus form a subgroup of $\aut(Y_m)$.
\par
In order to prove the existence of the Enriques surfaces of base change type, Hulek and Sch\"utt resort to the theory of lattices in a way that we partially explain in Section \ref{sec3}. To prove Theorem \ref{nodal} and Theorem \ref{nodalk}, we give a purely geometrical proof, that is, lattice free, of the existence of this particular kind of Enriques surfaces. More precisely, we reduce the problem to showing the nonemptiness of some particular Severi varieties of curves on some rational surfaces, for instance the rational elliptic surfaces and the second Hirzebruch surface $\mathbb{F}_2$.\par
The construction of Hulek and Sch\"utt starts from a rational elliptic surface, so that in Section \ref{sec2} we review the basics about rational elliptic surfaces, K3 surfaces and Enriques surfaces. In Section \ref{sec3} we recall and introduce some properties of the group law of an elliptic curve and of the group of sections of an elliptic surfaces: we give the notion of \textit{origin cutting} divisor, that will be crucial for the rest of the treatment. As an application, we compute the linear classes of all the torsion multisections for a general rational elliptic surface. In Section \ref{sec4} we introduce the main object of the work, the Enriques surfaces of base change type, and we explain why they are linked to some origin cutting linear systems on the rational elliptic surfaces involved in their construction. In Section \ref{sec5}, after recalling the basics about Severi varieties of curves on surfaces, we give our lattice free construction of the Enriques surfaces of base change type, thanks to which we are able to prove Theorem \ref{nodal} and Theorem \ref{nodalk}. The main tool will be the logarithmic Severi varieties of curves on surfaces, that roughly speaking parametrize curves with given tangency conditions to a fixed curve.

 \begin{acks} I want to thank the advisor of my PhD Thesis Andreas Leopold Knutsen for the fruitful conversations which inspired this work. I would also like to thank Concettina Galati for her help. \end{acks}

\section{Rational elliptic surfaces, K3 surfaces and Enriques surfaces \label{sec2}}

We recall the basics about elliptic surfaces, K3 surfaces and Enriques surfaces. As general references the reader might consult \cite{M}, \cite{BHPV}, \cite{CD} or \cite{HS}. \par

For us a \textit{genus one fibration} is a morphism $f:X\rightarrow C$, where $X$ is an algebraic surface and $C$ is a smooth curve, such that the general fiber is a smooth curve of genus one. If there is a section $s:C\rightarrow X$, we say that $f:X\rightarrow C$ is an \textit{elliptic fibration} (with a given section), and $X$ is an \textit{elliptic surface} over $C$.\par
If $X\rightarrow C$ is equipped with a chosen section $s$. Then the set of sections is an abelian group with the group addition defined fiberwise. The group of the sections of $X\rightarrow C$ is called Mordell-Weil group of the elliptic surface, denoted by $\mw(X\rightarrow C)$ or simply $\mw(X)$ if the surface has only one elliptic fibration or if it is clear to what fibration we are referring. The chosen section $s$, which is the zero element of $\mw(X)$, is called the zero-section.

\begin{example}
Let $C_1$ and $C_2$ be two smooth cubic curves in $\mathbb{P}^2$ and consider the pencil they generate. It has nine base points counted with multiplicity, corresponding to the nine intersection points between $C_1$ and $C_2$. Let $e:S=\tilde{\mathbb{P}}^2\rightarrow\mathbb{P}^2$ be the blow-up of $\mathbb{P}^2$ at the base points of the pencil of cubics. Then the pullback of the pencil is base point free and induces a morphism $f:S\rightarrow\mathbb{P}^1$. A general fiber of $f$ is the strict transform of a general
member of the pencil of cubics, which is a smooth elliptic curve. Then $f:S\rightarrow\mathbb{P}^1$ is a rational elliptic surface: the section $E$ can be chosen to be the exceptional divisor of the last blow-up of $S\rightarrow\mathbb{P}^2$. \end{example}
By \cite[Lemma IV.1.2]{M}, every rational elliptic surface arises in this way.\par

It is well-known that every rational elliptic surface has twelve singular fibers counted with multiplicity, and there is a wonderful theory of the configurations of singular fibers a rational elliptic surface can admits, developed by Miranda and Persson. The authors in  \cite{P} and \cite{M3} prove that there is a list of 279 possible confluences of singular fibers for a rational elliptic surface. In this work, unless differently specified, we will consider just rational elliptic surfaces with twelve nodal curves as singular fibers.
\begin{definition}
    We say that a rational elliptic surface is \textit{general} if it has twelve nodal curves as singular fibers.
\end{definition}

We call $E_1,\dots,E_9$ the exceptional divisors over the points $P_1,\dots,P_9$. With this notation, the Picard group of $S\cong\bl_{\{P_1,\dots,P_9\}}\mathbb{P}^2$ is 
\begin{center}$\pic(S)\cong\mathbb{Z}L\oplus\mathbb{Z}E_1\oplus\dots\oplus\mathbb{Z}E_9$,\end{center}
with $L$ the pullback of a line in $\mathbb{P}^2$.
If $S\cong \bl_{P_1,\dots,P_9}\mathbb{P}^2$ is a general rational elliptic surface, we choose the last exceptional divisor $E_9$ to be the zero-section of the fibration. 
 With the previous notations, the Mordell-Weil group of  $S$ is \begin{center} $\mw(S)\cong\mathbb{Z}^8$, \end{center}and it is generated by the exceptional divisors $E_1,\dots,E_8$. The neutral element is the zero-section $E_9$.

\begin{definition}
    A smooth projective surface $X$ is called \textit{K3 surface}
if $X$ is (algebraically) simply connected with trivial canonical bundle $\omega_X\cong\mathcal{O}_X$.\\
An \textit{Enriques surface} $Y$ is a quotient of a K3 surface $X$ by a fixed point free involution $\tau$. Such an involution is also called \textit{Enriques involution}.
\end{definition}

In a K3 surface $X$ (as well as in a rational elliptic surface) algebraic equivalence is the same as numerical equivalence and the Nerón-Severi group $\ns(X)$ is equipped with the structure of an even integral lattice of signature $(1, \rho(X)-1)$, where $\rho(X)$ is the Picard number of $X$.  
An Enriques surface $Y$ is not simply-connected; its fundamental group is $\pi_1(Y)=\mathbb{Z}/2\mathbb{Z}$.

Unlike in the K3 case, algebraic and numerical equivalence of divisors do not coincide on an Enriques surface $Y$: there is two-torsion in $\ns(Y)$ represented by the canonical divisor $K_Y$. The quotient $\ns(Y)_f$ of $\ns(Y)$ by its torsion subgroup is an even unimodular lattice, which is isomorphic to the so-called \textit{Enriques lattice}: \begin{center}
    $\numer(Y)=\ns(Y)_f\cong U\oplus E_8(-1)$.
\end{center}
Via pull-back under the universal covering, this lattice embeds primitively into $\ns(X)$. Here the intersection form is multiplied by two \begin{center}
    $U(2)\oplus E_8(-2)\hookrightarrow\ns(X)$.
\end{center}
Such K3 surfaces form a ten-dimensional irreducible moduli space.
\begin{definition}\label{verygenenr}
    We say that an Enriques surface $Y$ is \textit{Picard very general} if its universal covering $X$ is such that \begin{center}
        $\ns(X)\cong U(2)+E_8(-2)$
    \end{center}
\end{definition}
\begin{remark}
    An Enriques surface $Y$ is Picard very general if and only if the Picard rank of its universal covering $X$ is equal to $10$.
\end{remark}
K3 and Enriques surfaces are the only surfaces that may admit more than one genus 1 pencil.
It is well-known that every genus 1 pencil on an Enriques surface has exactly two fibers of multiplicity two, called \textit{half-fibers}. The canonical divisor of $Y$ can be represented as the difference of the supports of the half-fibers of a genus 1 pencil: if $F$ is a genus 1 pencil of $Y$ and \begin{center}
    $2E_1\sim F$ and $2E_2\sim F$, then $K_Y\sim E_1-E_2$.
\end{center}

The next result due to Galati and Knutsen (see \cite[Theorem 1.1]{GK}) states a necessary condition for the existence of a rational curve in the very general Enriques surface. Here very general just means that there exists a set that is the complement of a countable union of proper Zariski-closed subsets in the moduli space of Enriques surfaces satisfying the given conditions. 
\begin{theorem}\label{galknu}(Galati, Knutsen)
    Let $Y$ be a very general Enriques surface. If $C\subset Y$ is an irreducible rational curve, then $C$ is 2-divisible in $\numer(Y)$.
\end{theorem}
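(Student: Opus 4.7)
The plan is to pull $C$ back to the universal K3 double cover $\pi\colon X\to Y$ with Enriques involution $\tau$, and exploit the lattice structure there. Since $Y$ is very general, we have $\ns(X)\cong U(2)\oplus E_8(-2)$, and $\pi^{*}\colon\numer(Y)\to\ns(X)$ is injective and doubles the intersection form, with image contained in the $\tau^{*}$-invariant sublattice. A quick discriminant comparison (both $\numer(Y)$ with form multiplied by $2$ and $U(2)\oplus E_8(-2)$ have determinant $-2^{10}$) shows that $\pi^{*}$ is actually an isomorphism of abelian groups; in particular $\tau^{*}$ acts trivially on $\ns(X)$, and $[C]$ is $2$-divisible in $\numer(Y)$ if and only if $\pi^{*}[C]$ is $2$-divisible in $\ns(X)$.

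The key geometric step is to show that $\pi^{-1}(C)$ is reducible. Since $\pi$ is \'etale, $\pi^{-1}(C)$ is a reduced divisor on $X$; if it were a single irreducible curve $C_1$, then $\pi|_{C_1}\colon C_1\to C$ would be a non-trivial \'etale double cover, and taking normalizations would give a degree-$2$ morphism $\widetilde C_1\to\widetilde C\cong\PP^1$. A local analysis at each point $p\in C$ shows this morphism is unramified: for every analytic branch of $C$ at $p$ (one per preimage of $p$ in $\widetilde C$), the \'etale cover produces two disjoint copies of that branch, yielding two distinct preimages in $\widetilde C_1$ over the corresponding point of $\widetilde C$. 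Since $\PP^1$ admits no non-trivial connected \'etale cover, $\widetilde C_1$ would be disconnected, contradicting the irreducibility of $C_1$. Hence $\pi^{-1}(C)=C_1+C_2$ with $C_1,C_2$ exchanged by $\tau$ and each birational onto $C$ via $\pi$.

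Putting the two steps together, $\pi^{*}[C]=[C_1]+[C_2]$; since $\tau^{*}[C_1]=[C_2]$ and $\tau^{*}$ acts as the identity on $\ns(X)$, we have $[C_1]=[C_2]$, so $\pi^{*}[C]=2[C_1]\in 2\,\ns(X)$. Applying the inverse of the isomorphism $\pi^{*}$ then gives $[C]\in 2\,\numer(Y)$, as desired. The main obstacle I expect is the local branching computation in the middle step: one must treat arbitrary singularities of $C$ (not only nodes) in a uniform way, using that \'etale morphisms preserve the analytic local structure at each branch, together with the simple-connectedness of $\PP^1$ in the \'etale sense to force the contradiction.
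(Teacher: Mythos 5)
The paper does not prove this statement; it quotes it as \cite[Theorem 1.1]{GK}. Your argument is correct and is essentially the proof given by Galati and Knutsen: the preimage of $C$ on the K3 cover splits because the normalization $\mathbb{P}^1$ admits no connected \'etale double cover, and for $Y$ very general the discriminant comparison forces $\pi^*$ to be onto $\ns(X)$, so $\tau^*$ is trivial there, the two components are numerically equivalent, and $\pi^*[C]=2[C_1]$ descends to $2$-divisibility in $\numer(Y)$.
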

As a consequence of this Theorem, if $C$ is a rational curve on the very general Enriques surface $Y$, then there are no curves on $Y$ intersecting $C$ in just one point.

\section{Origin cutting divisors on elliptic surfaces \label{sec3}}

We briefly recall the notions and the main properties of the elliptic curves and some properties of the elliptic surfaces we did not point out in Section \ref{sec2}. As general references the reader might consult \cite{Si}, \cite{M} and \cite{SS}

It is a very well-known fact that every elliptic curve has a group law, that we denote by $\boxplus$, with origin a chosen point $O$. Every smooth elliptic curve can be embedded in the projective plane as a smooth cubic curve: the next classical result describes how the group law works in this context.\par Let $(E,O)$ be a smooth plane cubic: we call $P_O$ the third intersection point between $E$ and the tangent line to $E$ at $O$.
\begin{proposition}[Group law for cubic curves]\label{lawcubic}
  Let $D_1$ and $D_2$ be divisors in $\pic(\mathbb{P}^2)$ such that $D_1\sim D_2$, that \begin{center} $D_{1_{|E}}=a_1Q_1+\dots+a_kQ_k$ \end{center} and that \begin{center}$D_{2_{|E}}=b_1R_1+\dots+b_mR_m$ \end{center} with the $a_i$'s and the $b_j$'s integers. Then \begin{center}
      $a_1Q_1\boxplus\dots\boxplus a_kQ_k=b_1R_1\boxplus\dots\boxplus b_mR_m$.
  \end{center}  In particular, if $D$ is a degree $d$ curve in $\mathbb{P}^2$ intersecting $E$ in $3d$ (not necessarily distinct) points $Q_1,\dots,Q_{3d}$, then\begin{center}
      $Q_1\boxplus\dots\boxplus Q_{3d}=dP_O$.
  \end{center}  
\end{proposition}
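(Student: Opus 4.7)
The proof relies on the standard identification of the group law on a smooth cubic with the Picard group via the Abel--Jacobi map. The plan is to first recall that the map
\[
\phi : E \to \pic^0(E), \qquad P \mapsto [P - O]
\]
is a group isomorphism; this can be taken as the definition of $\boxplus$, or verified directly from the ``line through two points'' chord-tangent construction. Extending $\phi$ linearly to formal integer combinations $\sum a_i Q_i$, one obtains
\[
\phi\bigl(a_1 Q_1 \boxplus \cdots \boxplus a_k Q_k\bigr) = \Bigl[\sum a_i Q_i - \bigl(\sum a_i\bigr) O\Bigr].
\]

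For the first assertion, I would observe that $\pic(\mathbb{P}^2) \cong \mathbb{Z}$, so $D_1 \sim D_2$ just means $\deg D_1 = \deg D_2$; in particular $\sum a_i = \sum b_j =: N$. Since $D_1 \sim D_2$ on $\mathbb{P}^2$ and (implicitly) $E$ is a component of neither, restricting to $E$ preserves linear equivalence, so
\[
\sum a_i Q_i \;\sim\; \sum b_j R_j \quad \text{in } \pic(E).
\]
Subtracting $N \cdot O$ from both sides and applying $\phi^{-1}$ yields the equality in $E$.

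For the ``in particular'' statement, I would apply the first part with $D_1 = D$ of degree $d$ and $D_2 = dT$, where $T$ is the tangent line to $E$ at $O$. By the definition of $P_O$, the scheme-theoretic intersection is $T \cdot E = 2O + P_O$, so $(dT)|_E = 2dO + dP_O$ while $D|_E = Q_1 + \cdots + Q_{3d}$. The first part then gives
\[
Q_1 \boxplus \cdots \boxplus Q_{3d} \;=\; \underbrace{O \boxplus \cdots \boxplus O}_{2d} \boxplus \underbrace{P_O \boxplus \cdots \boxplus P_O}_{d} \;=\; dP_O,
\]
since $O$ is the neutral element of $\boxplus$.

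There is no serious obstacle here: the only subtle point is making the passage from linear equivalence in $\pic(\mathbb{P}^2)$ to linear equivalence in $\pic(E)$ rigorous, which requires the mild assumption that $E$ is not a component of $D_1$ or $D_2$ so that the restrictions are well-defined as Cartier divisors. Everything else is formal bookkeeping once $\phi$ is in hand.
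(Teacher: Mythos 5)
The paper states this proposition as a classical fact and gives no proof of its own, so there is nothing to compare against directly; judged on its own merits, your argument is correct and complete. It is the standard proof: the Abel--Jacobi isomorphism $P\mapsto[P-O]$ identifies $\boxplus$ with addition in $\pic^0(E)$, restriction of linearly equivalent divisors from $\mathbb{P}^2$ to $E$ (with $E$ not a component, as the statement implicitly assumes) preserves linear equivalence, and B\'ezout guarantees $\sum a_i=\sum b_j=3\deg D_1$, so the two restricted divisor classes differ by zero after subtracting the same multiple of $O$. Your handling of the ``in particular'' clause via $D_2=dT$ with $T\cdot E=2O+P_O$ is likewise exactly the intended reading of the definition of $P_O$, and your remark about the inflection-point case matches the sentence following the proposition in the paper.
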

If the origin $O$ is an inflection point for the cubic $E$, then the sum of the intersection points between $E$ and any curve is $O$.\begin{definition}
 An \textit{n-torsion point} of $(E,O)$ is a point $Q\in E$ such that $Q^{\boxplus n}=O$.
\end{definition}
\begin{lemma}
    Every smooth elliptic curve has $n^2-1$ nontrivial $n$-torsion points.
\end{lemma}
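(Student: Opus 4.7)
The plan is to reduce the statement to an elementary counting on a complex torus, via the analytic uniformization of a smooth elliptic curve. Recall the classical fact that for every smooth elliptic curve $(E,O)$ over $\mathbb{C}$ there exist a rank two lattice $\Lambda\subset\mathbb{C}$ and a biholomorphism $E\cong \mathbb{C}/\Lambda$ sending $O$ to $0$ and intertwining the group law $\boxplus$ with ordinary addition modulo $\Lambda$ (see, e.g., \cite{Si}). First I would record this identification so that $n$-torsion points of $(E,O)$ correspond precisely to $n$-torsion points of $(\mathbb{C}/\Lambda,0)$.

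Next, a class $[z]\in\mathbb{C}/\Lambda$ satisfies $nz\equiv 0$ modulo $\Lambda$ if and only if $nz\in\Lambda$, equivalently $z\in\tfrac{1}{n}\Lambda$. Hence the subgroup $E[n]$ of $n$-torsion points is naturally isomorphic to $\tfrac{1}{n}\Lambda/\Lambda$. Choosing a $\mathbb{Z}$-basis $\omega_1,\omega_2$ of $\Lambda$, the assignment
\[
\tfrac{a\omega_1+b\omega_2}{n}\;\longmapsto\;(\bar a,\bar b)
\]
is a well-defined group isomorphism $\tfrac{1}{n}\Lambda/\Lambda\cong(\mathbb{Z}/n\mathbb{Z})^2$. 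In particular $|E[n]|=n^2$, and removing the origin leaves exactly $n^2-1$ nontrivial $n$-torsion points.

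Since this is a classical statement, there is really no main obstacle to overcome: the only ingredient that is not completely formal is the uniformization $E\cong\mathbb{C}/\Lambda$, which is a standard result in the theory of elliptic functions. An alternative, more algebraic route would consist in showing that the multiplication-by-$n$ morphism $[n]\colon E\to E$ is a finite isogeny of degree $n^2$ which, being separable in characteristic zero, has a kernel whose cardinality equals its degree; however, the computation of $\deg[n]=n^2$ itself naturally relies either on the analytic description above or on intersection-theoretic arguments on $E\times E$, so the complex-torus proof is the cleanest.
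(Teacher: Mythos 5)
Your proof is correct. The paper itself states this lemma as a classical fact and supplies no proof at all, so there is nothing to compare against; your uniformization argument ($E\cong\mathbb{C}/\Lambda$, hence $E[n]\cong\tfrac{1}{n}\Lambda/\Lambda\cong(\mathbb{Z}/n\mathbb{Z})^2$, giving $n^2-1$ nontrivial points) is the standard and complete justification in the complex setting in which the paper works.
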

Every elliptic curve admits a natural involution $(-1)$, that acts by interchanging opposite points with respect to the origin $O$. The fixed locus of $(-1)$ is composed by $O$ and the three nontrivial 2-torsion points.\par
Let now $X\rightarrow C$ be an elliptic surface with a given section $s_0$ that we choose to be the zero-section of the fibration. For every $t\in C$, we call $0_t$ the intersection point between the fiber $X_t$ and the section $s_0$. We give the definition of torsion multisection for $X\rightarrow C$.
\begin{definition}[Torsion multisection]
    Let $\tilde{\mathcal{H}}_m$ be the closure of the locus in $X$ of points $P_t\in X_t$, with $X_t$ smooth elliptic fiber, such that $P_t^{\boxplus m}=0_t$.\\ We define $\mathcal{H}_m:=\tilde{\mathcal{H}}_m-s_0$ to be the $m$-torsion multisection of $X$.
\end{definition}
It is clear that $\mathcal{H}_m$ is an $(m^2-1)$-section for $X$.
\begin{remark}\label{hke9}
    We point out that $\mathcal{H}_m$ does not intersect the zero-section $s_0$. The proof is essentially performed by Miranda in \cite[Proposition VII.3.2]{M}: the author proves that if a torsion section meets the zero-section in an elliptic fibration, then the two section coincides; the identical argument shows that if a torsion multisection meets the zero-section, then the multisection has the zero-section as irreducible component.
\end{remark}

The choice of the last exceptional divisor $E_9$ as zero-section for a general rational elliptic surface $S$ in particular means that for every $t$ in the base $\mathbb{P}^1$, the origin of the fiber $F_t$ is its intersection with $E_9$. Let us call $O_t$ this point and $P_t$ the third intersection point between $F_t$ and the tangent line to $F_t$ at $O_t$. \par

We introduce the notion of origin cutting linear systems for a general rational elliptic surface $S$.
\begin{definition}
    Let $\mathcal{L}\in\pic(S)$ be a divisor such that \begin{center}
        $\mathcal{L}_{|F_{t}}=\sum\limits_{i=1}^r a_{i,t}Q_{i,t}$ \end{center} with $a_{i,t}\in\mathbb{Z}$ for every $i$ and for every $t\in\mathbb{P}^1$. We say that $\mathcal{L}$ is \textit{origin cutting} if \begin{center} $Q_{1,t}^{\boxplus a_{1,t}}\boxplus\dots\boxplus Q_{r,t}^{\boxplus a_{r,t}}=O_{t}$ \end{center}
        for every $t\in\mathbb{P}^1$.\\
    If $\mathcal{L}$ is effective and such that $\mathcal{L}\cdot F_t=k$, we sometimes refer to it as an origin cutting $k$-section.
\end{definition}
\begin{remark}
    By Proposition \ref{lawcubic}, the property of a divisor being origin cutting only depends on its linear class. For this reason, we can extend the notion of origin cutting divisor to the linear systems. 
In other words, the origin cutting linear systems consist of divisors cutting each curve of the elliptic pencil in points whose sum in the group law is the origin. \end{remark}
\begin{lemma}\label{tors}
    The torsion multisections are origin cutting.
\end{lemma}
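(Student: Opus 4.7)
The plan is to reduce the claim to a simple group-theoretic identity on a general smooth fiber and then propagate by linear equivalence. Fix a smooth fiber $F_t$ of the elliptic fibration on $S$, so that $(F_t,O_t)$ is an elliptic curve. By the definition of $\tilde{\mathcal{H}}_m$, the multisection $\mathcal{H}_m=\tilde{\mathcal{H}}_m-s_0$ meets $F_t$ set-theoretically in the $m^2-1$ nontrivial $m$-torsion points $E[m]\setminus\{O_t\}$. Because $\mathcal{H}_m\cdot F_t=m^2-1$ by the remark immediately following the definition, and because Remark \ref{hke9} rules out any contribution from $s_0$, these intersections must all be transverse. Hence, as a divisor on $F_t$, one has $\mathcal{H}_m|_{F_t}=\sum_{Q\in E[m]\setminus\{O_t\}}Q$.

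What is left to prove is the purely group-theoretic identity $\sum_{Q\in E[m]\setminus\{O_t\}} Q = O_t$ inside $(F_t,\boxplus,O_t)$, or equivalently $\sum_{Q\in E[m]} Q = O_t$. For any finite abelian group $G$, the involution $g\mapsto -g$ pairs each non-involution with its inverse, so those contributions cancel in the sum and one is left with $\sum_{g\in G}g = \sum_{g\in G[2]}g$. Applied to $G=E[m]\cong(\mathbb{Z}/m\mathbb{Z})^2$, this gives two cases. If $m$ is odd, $G[2]=\{O_t\}$ and the sum is $O_t$. If $m$ is even, $G[2]\cong(\mathbb{Z}/2\mathbb{Z})^2$ and its four elements sum to the neutral element by direct inspection. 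Either way, $\sum_{Q\in E[m]\setminus\{O_t\}} Q = O_t$, so $\mathcal{H}_m$ satisfies the origin-cutting condition on every smooth fiber.

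Finally, to extend the condition to the finitely many singular fibers of $S\to\mathbb{P}^1$, I would invoke the observation recorded just after the definition of origin cutting: the property depends only on the linear equivalence class, because linearly equivalent effective divisors on a reduced genus-one fiber have equal sums in the group law. Since $\mathcal{H}_m$ is origin cutting on a Zariski-dense set of fibers, the condition holds for its linear class and hence for $\mathcal{H}_m$ on every fiber. The one step that I expect to demand genuine care is the transversality argument identifying $\mathcal{H}_m|_{F_t}$ with $\sum_{Q\neq O_t} Q$ as a divisor on $F_t$; this is exactly where Remark \ref{hke9} is essential, since it prevents $s_0$ from absorbing part of the intersection number and thereby forcing hidden multiplicities in the $m$-torsion points.
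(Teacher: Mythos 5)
Your proof is correct and rests on the same key fact as the paper's one-line argument, namely that the $m^2-1$ nontrivial $m$-torsion points of an elliptic curve sum to the origin; you merely supply the elementary justification (pairing $Q$ with $\boxminus Q$ and checking the $2$-torsion) together with the transversality and singular-fiber details that the paper leaves implicit. No objections.
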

\begin{proof}
    The sum of the $m^2-1$ nontrivial $m$-torsion points of an elliptic curve is the origin of the group law.
\end{proof}
The next Lemma states one of the main properties of these systems. 
\begin{lemma}\label{mpl}
    If an origin cutting $k$-section $B$ of a general rational elliptic surface $S\cong \bl_{P_1,\dots,P_9}\mathbb{P}^2$ has a $k$-ple point $Q$, then either $Q\in\mathcal{H}_k$ or $Q\in E_9$. 
\end{lemma}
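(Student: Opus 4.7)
The plan is to localize at the fiber $F_{t_0}$ through $Q$ and read off the group-law condition.

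First I would use the multiplicity inequality for intersection numbers: if $B$ has multiplicity $m_B(Q) = k$ at $Q$ and $F_{t_0}$ has multiplicity $m_F(Q)$ at $Q$, then the local intersection number $(B \cdot F_{t_0})_Q$ is at least $m_B(Q) \cdot m_F(Q)$. Since $B$ is a $k$-section, we have the global bound $B \cdot F_{t_0} = k$. If $Q$ were a node of a singular nodal fiber, we would get $(B \cdot F_{t_0})_Q \geq 2k > k$, a contradiction. Hence $Q$ is a smooth point of $F_{t_0}$, that is, $m_F(Q) = 1$, forcing $(B \cdot F_{t_0})_Q = k$ and $B \cap F_{t_0} = \{Q\}$ set-theoretically. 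Consequently, $B_{|F_{t_0}} = kQ$ as divisors.

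Next I would apply the hypothesis that $B$ is origin cutting: by definition this restriction must satisfy
\[
Q^{\boxplus k} = O_{t_0}
\]
in the group law of $F_{t_0}$ with origin $O_{t_0} = E_9 \cap F_{t_0}$. This says exactly that $Q$ is a $k$-torsion point of $(F_{t_0}, O_{t_0})$. Now split into two cases. If $Q = O_{t_0}$ (the trivial $k$-torsion point), then $Q \in E_9$. Otherwise $Q$ is a nontrivial $k$-torsion point, which by the very definition of $\mathcal{H}_k$ as the closure of nontrivial $k$-torsion points in the smooth locus means $Q \in \mathcal{H}_k$.

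The only subtle step is the first one, where one must ensure that $Q$ cannot be a node of a singular fiber (so that the group law based at $O_{t_0}$ applies directly to $Q$); this is handled cleanly by the multiplicity inequality, using crucially that $S$ is a \emph{general} rational elliptic surface, so that singular fibers are at worst nodal and $m_F(Q) \in \{1,2\}$. Everything after that is a direct translation of the origin cutting condition via Proposition \ref{lawcubic}.
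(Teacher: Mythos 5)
Your proof is correct and follows the same route as the paper, whose entire argument is the single observation that a $k$-ple point $Q$ forces $B_{|F_{t_0}}=kQ$ and hence $Q^{\boxplus k}=O_{t_0}$ by the origin cutting condition, so that $Q$ is either the origin (on $E_9$) or a nontrivial $k$-torsion point (on $\mathcal{H}_k$). You simply make explicit the intermediate steps the paper leaves implicit, namely the multiplicity bound ruling out the node of a singular fiber and the final dichotomy.
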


\begin{proof}
    If $Q\in B$ is $k$-ple, then $Q^{\boxplus k}=0$ by definition of origin cutting divisor.
\end{proof}

\begin{remark}\label{even}
It is obvious that an origin cutting $k$-section $B$ cannot have a $(k+1)$-ple point. In particular, this implies that an origin cutting bisection can just have double points as singularities. 
\end{remark}

The next Proposition describes more precisely the origin cutting linear systems.
\begin{proposition}\label{forminv}
    Let $S\cong\bl_{\{P_1,\dots,P_9\}}\mathbb{P}^2$ be a general rational elliptic surface and let $\mathcal{L}\in\pic(S)$ be an effective divisor. Then, $\mathcal{L}$ is an origin cutting $k$-section (without $E_9$ as irreducible component) if and only if it is of the form \begin{center} $\mathcal{L}\sim 3(c+k)L-(c+k)E_1-\dots-(c+k)E_8-cE_9$,\end{center} for some $c\in\mathbb{Z}_+$.
\end{proposition}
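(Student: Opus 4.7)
The plan is to characterize origin cutting $k$-sections by computing explicitly the group-law sum of $\mathcal{L}|_{F_t}$ and then appealing to the linear independence of the Mordell--Weil generators. Write $\mathcal{L} = dL - \sum_{i=1}^9 m_i E_i$. For a representative plane curve of degree $d$ with multiplicity $m_i$ at $P_i$, Proposition~\ref{lawcubic} says that the $3d$ points of intersection with the cubic $C_t$ sum to $d\,P_t$ in the group law on $(F_t, O_t)$. The blowup absorbs $m_i$ intersections at each $P_i$ (generically, since the tangent directions of $C_t$ and of the representative at $P_i$ differ), leaving $k$ points on $F_t$; noting that the contribution $m_9\,O_t$ vanishes in the group law, the restriction $\mathcal{L}|_{F_t}$ sums to
\[
d\,P_t - \sum_{i=1}^8 m_i\, s_i(t),
\]
where $s_i(t) := E_i \cap F_t$. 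Thus the origin cutting condition becomes $\sum_{i=1}^8 m_i\, s_i(t) = d\,P_t$ for every $t \in \mathbb{P}^1$.

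The second key ingredient is another application of Proposition~\ref{lawcubic} to a pair of distinct cubics $F_t$ and $C_s$ in the pencil: their nine intersections are precisely the base points $P_1, \dots, P_9$, whose group-law sum on $F_t$ is $3\,P_t$. Discarding the vanishing term $P_9 = O_t$ yields the fundamental identity $\sum_{i=1}^8 s_i(t) = 3\,P_t$. Substituting this into the origin cutting equation yields
\[
\sum_{i=1}^8 (3 m_i - d)\, s_i(t) = 0 \quad \text{in the group law on every } F_t.
\]

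I now invoke the generality of $S$: the Mordell--Weil group $\mw(S) \cong \mathbb{Z}^8$ is free of rank eight with the sections $E_1, \dots, E_8$ as a $\mathbb{Z}$-basis, so the identically vanishing integer combination above forces $3 m_i = d$ for $i = 1, \dots, 8$. Setting $b := d/3 = m_1 = \cdots = m_8$ and using $k = \mathcal{L} \cdot F = 3d - \sum_i m_i$ gives $m_9 = b - k$; writing $c := m_9$ we obtain the claimed form $\mathcal{L} \sim 3(c+k)L - (c+k)(E_1 + \cdots + E_8) - c E_9$. The converse is immediate: such a class equals $k E_9 + (c+k) F$, whose fiberwise restriction is $\sim k\,O_t$, with group-law sum $O_t$. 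Finally, the effectivity of $\mathcal{L}$ together with the hypothesis that $E_9$ is not a component forces $c = \mathcal{L} \cdot E_9 \geq 0$, and the strict positivity $c \in \mathbb{Z}_+$ follows by ruling out the degenerate case $c = 0$ via Cayley--Bacharach, which in the crucial case $k = 1$ forces every effective representative of $E_9 + F$ to contain $E_9$.

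The main obstacle is the linear independence step, where I need that no nontrivial integer combination of the sections $E_1, \dots, E_8$ vanishes pointwise in the group law on every fiber. This relies precisely on the generality of $S$ (all twelve singular fibers irreducible nodal cubics), which makes $\mw(S)$ torsion-free of rank eight with the $E_i$'s as a basis; everything else reduces to Proposition~\ref{lawcubic} and Picard group arithmetic.
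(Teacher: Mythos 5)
Your argument is essentially the paper's own proof: push $\mathcal{L}$ forward to $\mathbb{P}^2$, apply Proposition~\ref{lawcubic} to get $\sum_i m_i P_i = d\,P_t$ in the group law of each fiber, combine with the identity $P_1\boxplus\dots\boxplus P_8 = 3P_t$ coming from a second cubic of the pencil, and use generality to force $m_1=\dots=m_8=d/3$. You actually make the key step more precise than the paper does (the paper only says ``by the generality of $S$ we deduce $b_1=\dots=b_8$'', whereas you reduce it to the vanishing of $\sum_i(3m_i-d)E_i$ in the free group $\mw(S)\cong\mathbb{Z}^8$ with basis $E_1,\dots,E_8$), and you also check the converse direction via $\mathcal{L}\sim (c+k)F+kE_9$, which the paper omits.

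The one step that is wrong is the last one. Your claim that $c>0$, i.e.\ that the case $c=0$ can be ruled out, is false: the torsion multisections $\mathcal{H}_n\sim 3(n^2-1)L-(n^2-1)(E_1+\dots+E_8)$ of Proposition~\ref{linearclassmult} are origin cutting $(n^2-1)$-sections with $c=0$ that do not contain $E_9$ (Remark~\ref{hke9}). Your Cayley--Bacharach argument only addresses $k=1$ and does not extend. Note that the paper's convention for $\mathbb{Z}_+$ includes $0$ (see the statement of Theorem~\ref{sigmam}, ``nonnegative integer $m\in\mathbb{Z}_+$'', and the proof of Proposition~\ref{linearclassmult}, where $c\in\mathbb{Z}_+$ is then shown to equal $0$), so all that is needed at this point is $c=\mathcal{L}\cdot E_9\geq 0$, which follows from effectivity and the hypothesis that $E_9$ is not a component, exactly as you observe. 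Dropping the final positivity claim leaves a correct proof along the same lines as the paper's.
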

\begin{proof}
    $\mathcal{L}\in\pic(S)$, then \begin{center}
        $\mathcal{L}\sim aL-b_1E_1-\dots-b_8E_8-cE_9$.
    \end{center}
    Since $\mathcal{L}$ is origin cutting, we have that for every $t\in\mathbb{P}^1$,\begin{center}
        $\mathcal{L}_{|F_{t}}=Q_{1,t}+\dots+Q_{k,t}$
    \end{center}
    in such a way that \begin{center}
        $Q_{1,t}\boxplus\dots\boxplus Q_{k,t}=O_{t}$.\end{center}
If we call $\overline{\mathcal{L}}$ and $\overline{F}$ the pushforwards of $\mathcal{L}$ and $F$ under $S\rightarrow\mathbb{P}^2$, we have that     
 \begin{center}
            $\overline{\mathcal{L}}_{|\overline{F}_{t}}=b_1P_1+\dots+ b_8P_8+ cP_9+ Q_{1,t}+\dots+ Q_{k,t}$.
        \end{center}
 But $\overline{\mathcal{L}}$ has degree $a$, whence \begin{center}
            $b_1P_1\boxplus\dots\boxplus b_8P_8\boxplus cP_9\boxplus Q_{1,t}\boxplus\dots\boxplus Q_{k,t}=b_1P_1\boxplus\dots\boxplus b_8P_8=aP_{t}$ for every $t$.
        \end{center}
        Furthermore, by Proposition \ref{lawcubic}, for every $t$ we have,  \begin{center}
            $P_1\boxplus\dots\boxplus P_8=3P_{t}$:
        \end{center}indeed, $P_1,\dots,P_9$ are the base points of the pencil of cubics and by the choice of $E_9$ as zero-section, we have that the origin of $\overline{F}_t$ is $P_9$.
        By the generality of $S$ (and then of $P_1,\dots,P_9$ as base points of a pencil of cubics), we deduce
        \begin{center}
            $b_1=\dots=b_8=:b$.
        \end{center} Putting the conditions together, we get $a=3b$. Lastly, since $\mathcal{L}$ is a $k$-section, we have that $3a-8b-c=b-c=k$, so that $b=c+k$ and this completes the proof.
    
\end{proof}
As an application, we compute the linear class of the torsion multisection $\mathcal{H}_n$ for a general rational elliptic surface $S\cong\bl_{\{P_1,\dots,P_9\}}\mathbb{P}^2$ for every $n\in\mathbb{Z}_+$.
\begin{proposition}\label{linearclassmult}
    For every $n\in\mathbb{Z}_+$, the $n$-torsion multisection is an $(n^2-1)$-section of the form \begin{center}
    $H_n\sim 3(n^2-1)L-(n^2-1)E_1-...-(n^2-1)E_8$.
\end{center} 
\end{proposition}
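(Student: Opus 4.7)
The plan is to combine three ingredients that are already in place: Lemma \ref{tors}, which says $\mathcal{H}_n$ is origin cutting; Remark \ref{hke9}, which says that $\mathcal{H}_n$ does not contain the zero-section $E_9$ as an irreducible component and does not meet $E_9$; and the structure theorem Proposition \ref{forminv}, which pins down the linear class of any origin cutting multisection (not containing $E_9$) up to a single integer parameter $c$.

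First I would apply Proposition \ref{forminv} with $k = n^2 - 1$, which is the degree of $\mathcal{H}_n$ as a multisection (an elliptic curve has exactly $n^2 - 1$ nontrivial $n$-torsion points). By that proposition, since $\mathcal{H}_n$ is effective, origin cutting (Lemma \ref{tors}), and has no $E_9$ component (Remark \ref{hke9}), there exists $c \in \mathbb{Z}_+$ such that
\[
\mathcal{H}_n \sim 3(c + n^2 - 1)L - (c + n^2 - 1)(E_1 + \dots + E_8) - c E_9.
\]

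The remaining task is to show $c = 0$. Here I would use the second part of Remark \ref{hke9}: since $\mathcal{H}_n$ has no component equal to $E_9$ and is disjoint from $E_9$, we have $\mathcal{H}_n \cdot E_9 = 0$. Using the intersection pairing on $\pic(S)$ given by $L^2 = 1$, $E_i^2 = -1$, $L \cdot E_i = 0$, and $E_i \cdot E_j = 0$ for $i \neq j$, a direct computation yields $\mathcal{H}_n \cdot E_9 = c$. Therefore $c = 0$ and the asserted formula follows.

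There is no real obstacle; the argument is essentially a bookkeeping check once the right lemmas are assembled. The only mild subtlety is making sure that Proposition \ref{forminv} applies, which requires that $\mathcal{H}_n$ be effective and free of an $E_9$ component — both of which are guaranteed by the definition of the torsion multisection together with Remark \ref{hke9}.
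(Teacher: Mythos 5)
Your proposal is correct and follows the paper's own proof exactly: both invoke Lemma \ref{tors} to see that $\mathcal{H}_n$ is origin cutting, apply Proposition \ref{forminv} with $k=n^2-1$ to reduce to a single parameter $c$, and then use Remark \ref{hke9} to conclude $\mathcal{H}_n\cdot E_9=0$, hence $c=0$. The only difference is that you spell out the intersection computation $\mathcal{H}_n\cdot E_9=c$ explicitly, which the paper leaves implicit.
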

\begin{proof}
  Lemma \ref{tors} ensures that the torsion multisections are origin cutting. Then, for every $n\in\mathbb{Z}_+$, we have that $\mathcal{H}_n$ is of the form \begin{center}
    $H_n\sim 3(n^2+c-1)L-(n^2+c-1)E_1-...-(n^2+c-1)E_8-cE_9$
\end{center} for some $c\in\mathbb{Z}_+$. Moreover, by Remark \ref{hke9} we have $\mathcal{H}_n\cdot E_9=0$, so that $c=0$.
\end{proof}

\begin{remark}\label{fixlocus}
In particular, Proposition \ref{linearclassmult} implies that the 2-torsion trisection $\mathcal{H}_2$ is \begin{center}
    $\mathcal{H}_2\sim 9L-3E_1-\dots-3E_8$, \end{center}
as showed by using other methods by Vakil in \cite{V}.
\end{remark}

We now discuss an example relevant to the rest of the work.

\begin{example}[Origin cutting bisections]\label{invbis}
For $k=2$, the form of an origin cutting bisection $\mathcal{B}_m$ such that $\mathcal{B}_m\cdot E_9=2m$ for some $m\in\mathbb{Z}_+$ is \begin{center}
    $\mathcal{B}_m\sim 6(m+1)L-2(m+1)E_1-\dots-2(m+1)E_8-2mE_9$.
\end{center}
We will be interested in rational members of these bisections: by Proposition \ref{mpl}, the double points of the bisections belong either to $\mathcal{H}_2$ or to $E_9$. 
\end{example}

 Every rational elliptic surface $S\cong \bl_{P_1,\dots,P_9}\mathbb{P}^2$ carries a natural involution $(-1)\in\aut(S)$, that acts fiberwise by interchanging opposite points with respect to the group law with origin $O_t$. This involution is known in the literature as the Bertini involution. As pointed out at the beginning of this section, for every $t\in\mathbb{P}^1$ such that $F_t$ is smooth, the points fixed by $(-1)$ are $O_t$ and the three nontrivial 2-torsion points of $F_t$. The fixed locus of $(-1)$ is the union of the zero-section $E_9$ and the trisection $\mathcal{H}_2$ parametrizing the nontrivial 2-torsion points of any smooth fiber. We describe the quotient of a rational elliptic surface by the involution $(-1)$. We call $q$ the quotient map $q:S\rightarrow S/(-1)$. This result is classical: see for example \cite[Proof of Proposition 3.2]{V}, \cite[Section 4.4, p.408]{CD} or \cite[Section 2]{DOPW}.

\begin{proposition}\label{quot}
    The quotient $S/(-1)$ is isomorphic to the second Hirzebruch surface $\mathbb{F}_2\cong\mathbb{P}(\mathcal{O}_{\mathbb{P}^1}\oplus\mathcal{O}_{\mathbb{P}^1}(-2))$.
\end{proposition}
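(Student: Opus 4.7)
The strategy is to realise $S/(-1)$ as a smooth $\mathbb{P}^1$-bundle over $\mathbb{P}^1$ and then pin down the bundle by a single intersection-number computation. First I would observe that the fixed locus $\mathrm{Fix}((-1))=E_9\sqcup\mathcal{H}_2$ is a disjoint union of smooth curves. The zero-section $E_9$ is smooth by construction, and it is disjoint from $\mathcal{H}_2$ by Remark \ref{hke9}; since $S$ is general, the trisection $\mathcal{H}_2$ is also smooth, as a local analysis at the node $p$ of each $I_1$ fiber shows. In coordinates $(x,y)$ on $S$ with $f$ given locally by $t=xy$, the involution must swap the two branches at $p$ and therefore acts as $(x,y)\mapsto(y,x)$; its fixed line $\{x=y\}$ is the smooth limit of the two colliding non-trivial $2$-torsion points, i.e.\ the local branch of $\mathcal{H}_2$ at $p$. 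Because $\mathrm{Fix}((-1))$ is then a smooth divisor, the quotient map $q\colon S\to S/(-1)$ is a flat double cover and $S/(-1)$ is smooth.

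Since $(-1)$ preserves fibers of $f$, the fibration descends to $\bar f\colon S/(-1)\to\mathbb{P}^1$, and I would check that every fiber of $\bar f$ is an irreducible rational curve. This is classical for smooth elliptic fibers; for a nodal fiber $F_t$ the involution lifts to $z\mapsto z^{-1}$ on the normalization $\mathbb{P}^1\to F_t$, whose quotient is $\mathbb{P}^1$, while the two pre-images of the node form a single orbit. Consequently $\bar f$ is a $\mathbb{P}^1$-bundle and $S/(-1)\cong\mathbb{F}_n$ for some $n\ge 0$.

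To determine $n$, consider the irreducible section $\bar E_9:=q(E_9)$ of $\bar f$. Because $E_9$ is a component of the ramification divisor, $q^*\bar E_9=2E_9$, so
\[
2\,\bar E_9^{\,2}=(q^*\bar E_9)^2=4\,E_9^{\,2}=-4,
\]
giving $\bar E_9^{\,2}=-2$. A Hirzebruch surface $\mathbb{F}_n$ with $n\ge 1$ contains a unique irreducible curve of negative self-intersection, namely the negative section of self-intersection $-n$, while $\mathbb{F}_0$ carries no such curve; hence $n=2$. The main technical obstacle throughout is the local analysis at the twelve nodal fibers: one must simultaneously verify that $\mathcal{H}_2$ is smooth there and that the corresponding fiber of $\bar f$ does not acquire an $A_1$ singularity at the image of the node. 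Once these local checks are dispatched, the rest of the argument is formal.
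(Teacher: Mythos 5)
Your argument is correct, but it is worth noting that the paper does not actually prove Proposition \ref{quot}: it declares the result classical and refers to Vakil, Cossec--Dolgachev and Donagi--Ovrut--Pantev--Waldram. You have therefore supplied the argument that the paper outsources, and it is essentially the standard one from those references. All the individual steps check out: $E_9$ and $\mathcal{H}_2$ are disjoint (indeed $\mathcal{H}_2\cdot E_9=0$, consistent with Remark \ref{hke9}); at the node of each $I_1$ fiber the involution restricted to the smooth locus $\mathbb{G}_m$ is $z\mapsto z^{-1}$, which on the normalization swaps the two points over the node, so the branches are exchanged and the local fixed locus is the smooth diagonal, which is exactly the branch of $\mathcal{H}_2$ with local intersection number $2$ with the fiber (accounting for $\mathcal{H}_2\cdot F=3$ with only one $2$-torsion point surviving on $\mathbb{G}_m$); the quotient of a branch-swapped node is smooth, so every fiber of $\bar f$ is a smooth rational curve and $S/(-1)$ is a Hirzebruch surface; and the computation $2\,\bar E_9^{\,2}=(2E_9)^2=4E_9^2=-4$ correctly uses $E_9^2=-1$ and forces $n=2$ since $-2$ must be the self-intersection of the negative section. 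Two small points you should make explicit rather than assert: that $(-1)$ has no isolated fixed points (every fixed point, including the twelve nodes, lies on $E_9\cup\mathcal{H}_2$), and that $\bar E_9$ is genuinely a section of $\bar f$ (which follows from $q^*\bar E_9=2E_9$ and $q^*\bar F_t=F_t$, giving $\bar E_9\cdot\bar F_t=1$). With those remarks your proof is complete and self-contained, which is arguably an improvement on the paper's bare citation; the trade-off is only length, since the local analysis at the twelve nodal fibers is exactly the content the cited sources already carry out.
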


We conclude the section by showing that the image of an origin cutting bisection under the involution $(-1)$ is the bisection itself.
 
\begin{lemma}
    The origin cutting bisections are invariant with respect to the involution $(-1)$.
\end{lemma}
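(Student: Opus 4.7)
The plan is to reduce the claim to a fiberwise statement about the group law on the smooth elliptic fibers of $S\to\mathbb{P}^1$, and then to upgrade that fiberwise statement to an equality of divisors by a density argument together with Proposition \ref{forminv}.

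First I would fix an effective origin cutting bisection $\mathcal{B}_m$ and restrict it to a smooth fiber $F_t$. By definition of origin cutting, $\mathcal{B}_m|_{F_t}=Q_{1,t}+Q_{2,t}$ with $Q_{1,t}\boxplus Q_{2,t}=O_t$, so the two intersection points are $\boxplus$-inverses of one another. Since the Bertini involution $(-1)$ preserves the elliptic fibration and acts on each smooth $F_t$ as negation for the group law with origin $O_t=F_t\cap E_9$, it exchanges $Q_{1,t}$ and $Q_{2,t}$; consequently the $0$-cycle $\mathcal{B}_m|_{F_t}$ is fixed by $((-1)|_{F_t})^{*}$ for every $t$ such that $F_t$ is smooth.

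Then I would globalize. Because $(-1)$ is an automorphism preserving every fiber, the effective divisor $(-1)^{*}\mathcal{B}_m$ restricts on each smooth $F_t$ to $((-1)|_{F_t})^{*}(\mathcal{B}_m|_{F_t})=\mathcal{B}_m|_{F_t}$, so $(-1)^{*}\mathcal{B}_m$ and $\mathcal{B}_m$ agree on the preimage of the Zariski dense open $U\subset\mathbb{P}^1$ parameterizing smooth fibers. Observing that $E_9$ lies in the fixed locus of $(-1)$ (its points are $\boxplus$-origins, hence their own inverses), we have $(-1)^{*}\mathcal{B}_m\cdot E_9=\mathcal{B}_m\cdot E_9=2m$, so by the previous paragraph $(-1)^{*}\mathcal{B}_m$ is again an effective origin cutting bisection with the same intersection with $E_9$. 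Proposition \ref{forminv} then places it in the same linear class as $\mathcal{B}_m$. Two effective divisors in the same linear class whose supports and multiplicities coincide on a Zariski dense open subset must coincide globally, giving $(-1)^{*}\mathcal{B}_m=\mathcal{B}_m$.

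The main conceptual step is the fiberwise translation of the origin cutting condition $Q_{1,t}\boxplus Q_{2,t}=O_t$ into the statement that $\{Q_{1,t},Q_{2,t}\}$ is a single $(-1)$-orbit on $F_t$; once this is in place, the remaining work is the routine passage from equality on a dense open set to equality of divisors, where Proposition \ref{forminv} is used to rule out potential stray vertical contributions supported on the singular fibers.
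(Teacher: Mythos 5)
Your core argument coincides with the paper's: the origin cutting condition $Q_{1,t}\boxplus Q_{2,t}=O_t$ says exactly that the two points of $\mathcal{B}_m\cap F_t$ are exchanged by fiberwise negation, so $(-1)$ fixes $\mathcal{B}_m|_{F_t}$ on every smooth fiber -- and the paper's proof consists of precisely this observation and stops there. Your extra globalization step addresses a legitimate worry (possible vertical components hiding over the finitely many singular fibers), but the principle you invoke to close it is false as stated: two effective divisors in the same linear class that agree over a dense open subset of the base need not coincide, e.g.\ $D+F_{t_1}$ and $D+F_{t_2}$ for two distinct fibers $F_{t_1}\sim F_{t_2}$ are linearly equivalent, effective, and agree away from $t_1,t_2$, yet differ. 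So Proposition \ref{forminv} together with linear equivalence cannot by itself rule out stray vertical contributions. The repair is immediate and does not use the linear class at all: the difference $(-1)^{*}\mathcal{B}_m-\mathcal{B}_m$ is supported on fibers; since $(-1)$ preserves every fiber setwise and the fibers of a general rational elliptic surface are irreducible, $(-1)^{*}$ fixes each vertical prime divisor, so the vertical parts of $\mathcal{B}_m$ and $(-1)^{*}\mathcal{B}_m$ agree on the nose, while each horizontal prime divisor is the closure of its restriction over the dense open $U$ and is therefore determined by the fiberwise computation. With that substitution your proof is complete and, apart from this extra (welcome) care, identical in substance to the paper's.
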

\begin{proof}
    $\mathcal{B}_m$ cuts every fiber $F_{t}$ in two points $Q_{1,t}$ and $Q_{2,t}$, in such a way $Q_{1,t}\boxplus Q_{2,t}=O_{t}$, or, equivalently, $Q_{1,t}=\boxminus Q_{2,t}$. This implies that for every origin cutting bisection $B\in|\mathcal{B}_m|$, we have $(-1)^*(B)=B$.
\end{proof}

\section{Enriques surfaces of base change type \label{sec4}}
We present the the main features of Enriques surfaces of base change type. We firstly describe their construction due to Hulek and Sch\"utt, then we will focus on the rational bisections they have and lastly we show their connection with some origin cutting bisections of the rational elliptic surface involved in their construction. Part of the proofs have been performed by Hulek and Sch\"utt in \cite{HS}: in order to make the reader familiar with the geometric ideas and the notations, we report and sometimes extend them by adding some details that will be important for the continuation.\par
Let $S=\bl_{\{P_1,\dots,P_9\}}\mathbb{P}^2$ denote a rational elliptic surface. We let now \begin{center}
    $g:\mathbb{P}^1\rightarrow\mathbb{P}^1$
\end{center} be a morphism of degree two. Denote the ramification points by $t_0$ and $t_{\infty}$. It is well-known that the pullback of $X$ via $g$ is a K3 surface under the assumption that all the irreducible components of the fibers of $S$ over $t_0$ and $t_{\infty}$ are reduced. With abuse of notation, we denote by $g$ also the double cover $X\rightarrow S$ and we denote by $\tilde{E}_i$ the pull-backs of the exceptional divisors $E_i$ of $S$. With this notation, $\tilde{E}_9$ is the zero-section for the induced elliptic fibration on $X$.\par Moreover, we denote by $S_t$ the fiber on $S$ over a point $t\in\mathbb{P}^1$ and by $X_t$ and $X_{-t}$ the two components of its preimage on $X$. Since $S_t\cong X_t\cong X_{-t}$, if $Q_t\in S_t$, we denote the two points in its preimage $g^{-1}(Q_t)$ by $\tilde{Q_t}$ and $\tilde{Q}_{-t}$. Sometimes, we refer to the pair $X_t$ and $X_{-t}$ as \textit{twin fibers}, to the pair $\tilde{Q_t}$ and $\tilde{Q}_{-t}$ as \textit{twin points in twin fibers} and to the pair $\tilde{Q_t}$ and $\boxminus\tilde{Q}_{-t}$ as \textit{opposite points in twin fibers} (with respect to $\tilde{E}_9$). \par
Let $\iota$ denote the deck transformation for $g$, i.e. $\iota\in\aut(\mathbb{P}^1)$ such that $g=g\circ\iota$. 
Then $\iota$ induces an automorphism of $X$ that we shall also denote by $\iota$. The quotient $X/\iota$ returns exactly the rational elliptic surface $S$ we started with. 

\begin{remark}
    We obtain a ten-dimensional family of elliptic K3 surfaces: eight dimensions from the rational elliptic surfaces and two dimensions from the base change.
\end{remark}

\begin{definition}\label{vergenbaschan}
    We say that such a base change $g:X\rightarrow S$ is \textit{very general} if $S$ is general, $S_{t_0}$ and $S_{t_{\infty}}$ are smooth elliptic curves and $\iota^*$ acts as the identity on $\ns(X)$.\par In this case we also say that $X$ is 
\textit{base change very general} as K3 surface of base change type.
\end{definition}

\begin{proposition}
    A base change very general K3 surface $X$ does not carry any Enriques involution.
\end{proposition}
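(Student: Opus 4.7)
The plan is to apply Nikulin's lattice criterion for the existence of an Enriques involution on a K3 and to exhibit a direct obstruction: $\ns(X)$ contains a $(-2)$-class that could not possibly lie in the Enriques sublattice $U(2)\oplus E_8(-2)$.

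First, I would establish that $\rho(X)=10$. Since the two ramification fibers $S_{t_0}$ and $S_{t_{\infty}}$ are smooth elliptic curves, the fixed locus $\fix(\iota)$ on $X$ is the disjoint union $X_{t_0}\sqcup X_{t_{\infty}}$ of two smooth elliptic curves, with Euler characteristic zero. As $S=X/\iota$ is rational, $\iota$ must be anti-symplectic (it acts as $-1$ on $H^{2,0}(X)$). The topological Lefschetz fixed-point formula, combined with $H^1(X,\mathbb{Z})=H^3(X,\mathbb{Z})=0$, then yields $\mathrm{rank}\,H^2(X,\mathbb{Z})^{\iota^*}=10$. Combined with the chain $g^*\ns(S)\subseteq \ns(X)\subseteq H^2(X,\mathbb{Z})^{\iota^*}$ (the last inclusion being precisely the hypothesis that $\iota^*$ acts as the identity on $\ns(X)$) and $\mathrm{rank}\,g^*\ns(S)=\mathrm{rank}\,\ns(S)=10$, this forces $\rho(X)=10$.

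Next, I would exhibit a $(-2)$-class in $\ns(X)$. By the projection formula for the degree-$2$ map $g$, the pullback of the zero-section satisfies $(g^*E_9)^2=2\,E_9^2=-2$, so $g^*E_9\in\ns(X)$ is the desired class. The last step is the lattice comparison: by Nikulin's criterion, $X$ admits an Enriques involution if and only if $\ns(X)$ primitively contains a sublattice isomorphic to $M:=U(2)\oplus E_8(-2)$ whose orthogonal complement in $H^2(X,\mathbb{Z})$ contains no $(-2)$-vectors. Since $\rho(X)=10=\mathrm{rank}\,M$, any primitive embedding $M\hookrightarrow\ns(X)$ is necessarily an isomorphism. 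But $M=(U\oplus E_8(-1))(2)$ is the $2$-scaling of an even unimodular lattice, so every self-intersection in $M$ lies in $4\mathbb{Z}$; in particular $M$ contains no $(-2)$-class, contradicting the presence of $g^*E_9$. Hence no Enriques involution exists on $X$.

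I expect the main technical hurdle to be the rank computation $\rho(X)=10$, which uses crucially both halves of the ``very general'' hypothesis: smoothness of the ramification fibers to control $\chi(\fix(\iota))$ in the Lefschetz formula, and $\iota^*=\mathrm{id}$ on $\ns(X)$ to pin the Picard lattice inside the $(+1)$-eigenspace. Once $\rho(X)=10$ is secured, the final obstruction is immediate: the divisibility by $4$ of all squares in $U(2)\oplus E_8(-2)$ rules out the $(-2)$-class $g^*E_9$.
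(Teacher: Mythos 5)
Your proof is correct, but it takes a genuinely different route from the paper's. The paper's argument is essentially a two-line citation: it invokes Hulek--Sch\"utt's computation $\ns(X)\cong U\oplus E_8(-2)$ for the base change very general $X$, and then observes that the twisted Enriques lattice $U(2)\oplus E_8(-2)$ admits no primitive embedding into that rank-$10$ lattice (a primitive embedding between lattices of equal rank is an isomorphism, and the discriminants differ), so the necessary condition for an Enriques involution fails. You avoid determining $\ns(X)$ as a lattice: you extract only its rank, via the topological Lefschetz formula for the base-change involution $\iota$ --- whose fixed locus is the two smooth ramification fibers, of Euler number zero, giving $\operatorname{tr}(\iota^*|H^2)=-2$ and hence an invariant part of rank $10$ --- squeezed between $g^*\ns(S)$ (rank $10$ since $g_*g^*=2\,\mathrm{id}$) and $H^2(X,\mathbb{Z})^{\iota^*}$ using the hypothesis that $\iota^*=\mathrm{id}$ on $\ns(X)$. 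You then exhibit the explicit $(-2)$-class $g^*E_9=\tilde{E}_9$ and note that any primitive embedding of the rank-$10$ lattice $U(2)\oplus E_8(-2)$ into the rank-$10$ lattice $\ns(X)$ would be an isomorphism, which is impossible because every square in $U(2)\oplus E_8(-2)=(U\oplus E_8(-1))(2)$ lies in $4\mathbb{Z}$. Your version is more self-contained: it needs only the necessary direction of the lattice criterion, namely the primitive embedding $\numer(Y)(2)\hookrightarrow\ns(X)$ already recalled in Section 2, not the full ``if and only if'' you quote (whose $(-2)$-vector condition is in any case usually stated for the orthogonal complement inside $\ns(X)$ rather than inside $H^2(X,\mathbb{Z})$ --- harmless here, since you never use that direction). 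Two cosmetic remarks: anti-symplecticity of $\iota$ is not actually needed for your rank count, since you pin $\ns(X)$ inside the invariant part by hypothesis rather than by Hodge theory; and it is worth saying explicitly that $\fix(\iota)$ consists of the fibers of $X$ over the two ramification points, isomorphic to $S_{t_0}$ and $S_{t_\infty}$, which is exactly what the smoothness assumption in the definition of a very general base change guarantees.
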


\begin{proof}
    $\ns(X)\cong U\oplus E_8(-2)$ (see \cite[Section 3.2]{HS}). Since the general Enriques lattice does not embed primitively into $U\oplus E_8(-2)$, $X$ cannot admit an Enriques involution. 
\end{proof}

Hulek and Sch\"utt in \cite{HS} impose a geometric condition on the base change $g:X\rightarrow S$ that allows to construct (a countable number of) families of K3 surfaces with an Enriques involution. In order to exhibit K3 surfaces with Enriques involution within our family of K3 surfaces of base change type, we need the following Lemma, that summarizes the discussion in \cite[Section 3.3]{HS}.
\begin{lemma}[Hulek-Sch\"utt]
    Let $P$ be a section for the elliptic fibration on $X$ induced by the one of $S$. Then \begin{itemize}
        \item either $P$ is invariant with respect to $\iota^*$,
        \item or $P$ is anti-invariant with respect to $\iota^*$ (meaning that $\iota^*(P)=(-1)^*(P)$,where $(-1)$ indicates the involution on $X$ acting fiberwise by interchanging opposite points with respect to the zero-section $\tilde{E}_9$).
    \end{itemize}
    Moreover, in the former case, $P$ is the pull-back of a section $E\in\mw(S)$ and it cuts twin points in twin fibers, while in the latter $P$ cuts opposite points in twin fibers.
\end{lemma}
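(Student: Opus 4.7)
The plan is to analyze a section $P$ through the degree of $g|_P\colon P \to g(P)$ together with how $\iota^*$ acts on $\mw(X)$. First, since $\tilde{E}_9 = g^*E_9$, the involution $\iota^*$ preserves the zero section and therefore descends to a group involution of $\mw(X)$; moreover it commutes with the fiberwise inversion $(-1)^*$, as both act compatibly with the identifications $X_t \cong S_{g(t)} \cong X_{-t}$ induced by the base change. Thus the dichotomy of the lemma amounts to the assertion that every section lies in one of the two eigenspaces of $\iota^*$.

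In the invariant case, I would first verify that $\iota^*(P) = P$ forces the map $g|_P$ to have degree two: the two points $P \cap X_t$ and $P \cap X_{-t}$ sit in twin fibers, are exchanged by $\iota$, and therefore map to the same point of $S_{g(t)}$. Consequently $E := g(P)\subset S$ is a section of $S\to\PP^1$ and $P = g^*(E)$. In particular the two points $P\cap X_{\pm t}$ are the two preimages of the single point $E\cap S_{g(t)}$, i.e.\ twin points in twin fibers.

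In the anti-invariant case $\iota^*(P)=(-1)^*(P)$, I would compute the intersections with twin fibers directly. Writing $P\cap X_t=\tilde{Q}_t$ and $P\cap X_{-t}=\tilde{R}_{-t}$ with images $Q, R\in S_{g(t)}$ under $g$, we have $\iota^*(P)\cap X_t = \iota(\tilde{R}_{-t})$, the point of $X_t$ lying over $R$, while $(-1)^*(P)\cap X_t = \boxminus\tilde{Q}_t$, the point of $X_t$ lying over $\boxminus Q$. Equating these yields $R = \boxminus Q$, which shows that $P$ cuts opposite points in twin fibers.

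The main obstacle is establishing the dichotomy itself: a priori $\iota^*(P)$ is just some other section of $X$, with no reason to coincide with $\pm P$ in $\mw(X)$. I would argue this by splitting on $\deg g|_P \in\{1,2\}$: the value $2$ is handled in the invariant case, while the value $1$ produces a bisection $B := g(P)\subset S$ whose preimage splits as $g^{-1}(B) = P + \iota^*(P)$ into two disjoint sections. Monodromy of the induced double cover $g^{-1}(B)\to B$ forces $B$ to be ramified exactly over the two branch points of $g\colon\PP^1\to\PP^1$, and tracing this through the group-law on the fibers of $S$ yields $R = \boxminus Q$ fiberwise, i.e.\ the required anti-invariance relation.
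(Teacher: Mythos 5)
The paper offers no proof of this statement: it is introduced as a summary of the discussion in \cite[Section 3.3]{HS} and simply cited, so there is nothing in-paper to compare your argument against line by line. Your handling of the two individual cases is correct and matches the intended geometric content: if $\iota^*(P)=P$ then $g|_P$ has degree two, $E:=g(P)$ is a section of $S$, $P=g^*(E)$ and $P$ cuts twin points; and unwinding $\iota^*(P)=(-1)^*(P)$ fiberwise does give $R=\boxminus Q$.

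The genuine gap is exactly where you locate it, the dichotomy, and your proposed fix does not close it. When $\deg g|_P=1$, the splitting $g^{-1}(B)=P\cup\iota(P)$ and the monodromy argument only show that $B$ meets the fixed fibers $S_{t_0}$, $S_{t_\infty}$ with even local multiplicities; this puts no constraint whatsoever on the group-law relation between the two points of $B\cap S_{g(t)}$, so ``tracing this through the group law'' cannot yield $R=\boxminus Q$. In fact the dichotomy, read as a statement about an arbitrary section, fails in the paper's own setting: by Remark \ref{P odd}, $\mw(X_m)$ is freely generated by the $\iota^*$-invariant sections $\tilde{E}_1,\dots,\tilde{E}_8$ and the anti-invariant $P$, and for $P':=\tilde{E}_1\boxplus P$ one computes $\iota^*(P')=\tilde{E}_1\boxminus P$, which equals neither $P'$ nor $(-1)^*(P')=\boxminus\tilde{E}_1\boxminus P$ because $\mw(X_m)$ is torsion free. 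This $P'$ maps to a bisection of $S$ whose preimage splits, yet it cuts neither twin nor opposite points in twin fibers; it cuts $\tilde{E}_{1,t}\boxplus Q_t$ and $\tilde{E}_{1,-t}\boxminus Q_{-t}$. What is actually true, and what \cite[Section 3.3]{HS} establishes, is the eigenspace decomposition $\mw(X)\otimes\mathbb{Q}=(\mw(X)\otimes\mathbb{Q})^{+}\oplus(\mw(X)\otimes\mathbb{Q})^{-}$ with $(\mw(X)\otimes\mathbb{Q})^{+}=g^{*}\mw(S)\otimes\mathbb{Q}$ and the anti-invariant part identified with the Mordell--Weil group of the quadratic twist; the lemma must be read as describing those sections that lie in one of the two eigenspaces, not all sections. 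A smaller inaccuracy: $P$ and $\iota(P)$ need not be disjoint, since they meet over the points where $B$ is tangent to the fixed fibers.
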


 We denote by $\boxplus P\in\aut(X)$ the involution on $X$ given by the fiberwise translation for $P$.

\begin{proposition}\label{p -p}
   Let $P$ be an anti-invariant section with respect to $i^*$. Then, the quotient $g:X\rightarrow S$ identifies $P$ with its opposite section $\boxminus P$ with respect to $\tilde{E}_9$, as well as $P^{\boxplus k}:=P\boxplus\dots\boxplus P$ with $P^{\boxminus k}:=\boxminus P\boxminus\dots\boxminus P$.  
\end{proposition}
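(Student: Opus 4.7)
The plan is to reduce the statement to a single observation: that $\iota^*$ acts on $\mw(X)$ as a group automorphism, so anti-invariance of $P$ propagates to anti-invariance of every multiple $P^{\boxplus k}$, and the quotient map $g$ then identifies each such section with its opposite simply because it collapses $\iota$-orbits.

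First I would check that $\iota^*$ is indeed a group automorphism of $\mw(X)$. The involution $\iota$ permutes the twin fibers $X_t$ and $X_{-t}$ via isomorphisms of elliptic curves, and it preserves the zero-section $\tilde{E}_9 = g^*(E_9)$; therefore the restriction $\iota\colon X_t \to X_{-t}$ sends $0_t \mapsto 0_{-t}$, which forces it to be a group isomorphism. Defining addition fiberwise on sections, this implies $\iota^*(Q_1 \boxplus Q_2) = \iota^* Q_1 \boxplus \iota^* Q_2$ for any two sections, so $\iota^*$ is a group homomorphism (hence an involutive automorphism) of $\mw(X)$.

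With this in place, from the anti-invariance hypothesis $\iota^*(P) = \boxminus P$ I apply the homomorphism property to obtain
\[
\iota^*(P^{\boxplus k}) \;=\; (\iota^* P)^{\boxplus k} \;=\; (\boxminus P)^{\boxplus k} \;=\; P^{\boxminus k}
\]
for every $k\in\mathbb{Z}_+$. Because $\iota$ is an involution, $\iota^*$ agrees with $\iota_*$ on cycles and hence with the set-theoretic image $\iota(\,\cdot\,)$ on reduced irreducible divisors; so the equality above says precisely that $\iota$ maps the section $P^{\boxplus k}$ onto $P^{\boxminus k}$ as subsets of $X$. Since by construction $g\colon X \to S = X/\iota$ identifies points in the same $\iota$-orbit, we conclude $g(P^{\boxplus k}) = g(P^{\boxminus k})$, which is the claim (specializing to $k=1$ gives the identification of $P$ with $\boxminus P$).

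The only subtle point is the verification that $\iota^*$ respects the group law on $\mw(X)$; this would fail in general if $\iota$ did not fix the zero-section, but here it follows for free from $\tilde{E}_9 = g^*(E_9)$. Everything else is a direct consequence of anti-invariance and of $g$ being the quotient by $\langle \iota \rangle$.
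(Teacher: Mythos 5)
Your proof is correct and follows essentially the same route as the paper's: both arguments reduce to the facts that anti-invariance means $\iota$ carries $P$ onto $\boxminus P$ (it cuts opposite points in twin fibers) and that $g$ collapses $\iota$-orbits. Your explicit observation that $\iota^*$ is a group automorphism of $\mw(X)$ --- because $\iota$ preserves $\tilde{E}_9$ and hence respects the fiberwise group law --- makes the extension to $P^{\boxplus k}$ cleaner than the paper's ``in the same way one can prove'' remark, but it is the same underlying idea.
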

\begin{proof}
    For every $t\in\mathbb{P}^1$, if $P$ cuts a fiber $X_t$ in a point $\tilde{Q}_t$, the opposite section $\boxminus P$ cuts $X_t$ in the opposite point $\boxminus\tilde{Q}_t$. 
     Moreover, $P$ intersects the twin fiber $X_{-t}$ in $\boxminus\tilde{Q}_{-t}$, while $\boxminus P$ cuts $X_{-t}$ in $\tilde{Q}_{-t}$. To complete the proof, it is sufficient to notice that $g$ identifies $\tilde{Q}_t$ and $\tilde{Q}_{-t}$ as well as $\boxminus\tilde{Q}_t$ and $\boxminus\tilde{Q}_{-t}$. In the same way one can prove that $P^{\boxplus k}$ is identified with $P^{\boxminus k}$.
\end{proof}

\begin{remark}
    If there exists $P\in\mw(X\rightarrow\mathbb{P}^1)$ that is anti-invariant with respect to $\iota^*$, then $X\rightarrow S$ is not very general in the sense of Definition \ref{vergenbaschan}. Indeed, by Proposition \ref{p -p} we have $\iota^*(P)=\boxminus P\nsim P$, whence $\iota^*$ does not act as the identity on $\ns(X)$.
\end{remark}

 Consider the following automorphism of $X$ \begin{center}
    $\tau:=\iota\circ (\boxminus P)$.
\end{center}
\begin{proposition}[Hulek-Sch\"utt]\label{enrinv}
    $\tau\in\aut(X)$ is an involution and it is an Enriques involution if and only if $P$ does not intersect $\tilde{E}_9$ along $X_{t_0}$ and $X_{t_{\infty}}$.
\end{proposition}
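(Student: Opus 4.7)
The plan is to establish two things separately: that $\tau^2 = \operatorname{id}$, and that $\tau$ acts fixed-point freely precisely under the stated non-intersection condition. Since a fixed-point-free involution of a K3 surface is by definition an Enriques involution, these two facts together will prove the proposition.

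For the involution step, I would first note that $\iota$ preserves the zero section $\tilde{E}_9$ (being the pull-back of the zero section $E_9$ of $S$), so $\iota$ restricts to group isomorphisms between twin fibers once $\tilde{E}_9$ is chosen as origin. The anti-invariance $\iota^*(P) = \boxminus P$ then reads as $\iota(P_t) = \boxminus P_{-t}$ inside $X_{-t}$, which in terms of fiberwise translations amounts to the conjugation identity $\iota \circ (\boxminus P) \circ \iota^{-1} = \boxplus P$. From this,
\[
\tau^2 \;=\; \iota \circ (\boxminus P) \circ \iota \circ (\boxminus P) \;=\; \iota^2 \circ (\boxplus P) \circ (\boxminus P) \;=\; \operatorname{id}.
\]

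For the fixed-point analysis, I would split according to the fiber of $X \to \mathbb{P}^1$. If $t \notin \{t_0, t_\infty\}$, then $\iota(X_t) = X_{-t}$ is disjoint from $X_t$, and since $\boxminus P$ preserves fibers, $\tau$ moves $X_t$ to $X_{-t}$ and has no fixed points there. Over a ramification fiber $X_{t_0}$, the deck transformation $\iota$ acts as the identity (the ramification locus of $g$ is exactly $X_{t_0} + X_{t_\infty}$), so $\tau|_{X_{t_0}}$ reduces to the fiberwise translation $\boxminus P_{t_0}$. Under the very general hypothesis $X_{t_0}$ is a smooth elliptic curve, on which translation by a nontrivial group element is fixed-point free, whereas translation by the origin is the identity (so that the whole ramification fiber becomes a fixed curve). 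The same analysis applies to $X_{t_\infty}$. Hence $\tau$ is fixed-point free if and only if $P_{t_0} \neq 0_{t_0}$ and $P_{t_\infty} \neq 0_{t_\infty}$, i.e., $P$ does not intersect $\tilde{E}_9$ along $X_{t_0}$ or $X_{t_\infty}$.

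The step I expect to require the most care is the derivation of the conjugation identity $\iota \circ (\boxminus P) \circ \iota^{-1} = \boxplus P$ from the anti-invariance of $P$: it hinges on $\iota$ acting as a group homomorphism on (and between) fibers, which itself rests on $\iota$ fixing $\tilde{E}_9$. Once this identity is in hand, both the involution property and the fixed-point analysis follow from direct verifications together with the standard fact that deck transformations of double covers act trivially on their ramification loci.
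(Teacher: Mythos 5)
The paper itself gives no proof of this proposition: it is quoted from Hulek and Sch\"utt, and the only related computation in the text is the fiberwise formula $\tau(x_t)=x_{-t}\boxplus P_{-t}$ appearing in the proof of Lemma \ref{pide9}. Your argument is correct and is essentially the standard one behind that formula: the conjugation identity $\iota\circ(\boxminus P)\circ\iota^{-1}=\boxplus P$ is exactly the translation of the anti-invariance $\iota(P_t)=\boxminus P_{-t}$ into the group of fiberwise translations (legitimate because $\iota$ fixes $\tilde{E}_9$ and hence is a group isomorphism between twin fibers), and it immediately yields $\tau^2=\mathrm{id}$; the fixed-point analysis correctly reduces to the two ramification fibers, where $\iota$ acts trivially and $\tau$ becomes translation by $\boxminus P_{t_0}$, resp.\ $\boxminus P_{t_\infty}$. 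The one point worth making explicit is that your final step uses the smoothness of $X_{t_0}$ and $X_{t_\infty}$: on a singular fiber (say of type $I_1$) a nontrivial translation still fixes the node, so the equivalence ``fixed-point free $\Leftrightarrow$ $P\cap\tilde{E}_9$ avoids the ramification fibers'' genuinely requires the hypothesis, present in the paper's setup, that the fibers over $t_0$ and $t_\infty$ are smooth elliptic curves. With that hypothesis acknowledged, the proof is complete.
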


\begin{definition}In the above set-up, let $Y=X/\tau$ denote the associated Enriques surface. We say that $Y$ is an \textit{Enriques surfaces of base change type} and we denote by $f$ the quotient $X\rightarrow Y$. \end{definition}

\begin{remark}The given elliptic fibration on $X$ induces a genus 1 fibration on $Y$. Here the smooth fiber $Y_t$ of $Y$ at $t$ is isomorphic to the fibers $X_t$ and $X_{-t}$ at $g^{-1}(t)$ as genus 1 curves or to the fiber of the rational elliptic surface $S_t$.\end{remark}

\begin{lemma}\label{pide9}(Hulek, Sch\"utt)
    The sections $\tilde{E}_9$ and $P$ of the specified elliptic fibration on $X$ are identified under $\tau$ and thus give a rational bisection for the induced genus 1 fibration on $Y$.
\end{lemma}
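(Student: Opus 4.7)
The plan is to unpack the definition $\tau=\iota\circ(\boxminus P)$, show that it swaps $\tilde{E}_9$ and $P$, and then deduce that their common image $B\subset Y$ is a rational bisection by a short intersection-theoretic computation.

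First I would verify that $\tau(\tilde{E}_9)=P$ and $\tau(P)=\tilde{E}_9$. The fiberwise translation $\boxminus P$ sends the zero section to its opposite, i.e.\ $(\boxminus P)(\tilde{E}_9)=\boxminus P$, since on each smooth fiber it carries $0_t$ to $0_t\boxminus P_t=\boxminus P_t$. Applying $\iota$ and invoking the anti-invariance of $P$ from Proposition~\ref{p -p} gives $\iota(\boxminus P)=P$, so $\tau(\tilde{E}_9)=P$. Conversely, $(\boxminus P)(P)=\tilde{E}_9$ because $P_t\boxminus P_t=0_t$ on each fiber, and $\iota$ fixes $\tilde{E}_9=g^{*}(E_9)$ since this section is pulled back from $S$; hence $\tau(P)=\tilde{E}_9$. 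Consequently the quotient map $f:X\to Y$ identifies $\tilde{E}_9$ with $P$ into a single irreducible curve $B\subset Y$, with $f^{*}(B)=\tilde{E}_9+P$ as divisors on $X$.

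For the bisection property, I would pick $t\in\mathbb{P}^1$ lying over a general point of the base of $\epsilon:Y\to\mathbb{P}^1$ (away from the branch points of $g$). Then $f^{*}(Y_t)=X_t+X_{-t}$, and the projection formula yields
$$2\,(B\cdot Y_t)=f^{*}(B)\cdot f^{*}(Y_t)=(\tilde{E}_9+P)\cdot(X_t+X_{-t})=4,$$
since each section meets every smooth fiber exactly once; thus $B\cdot Y_t=2$.

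Rationality is automatic: $\tilde{E}_9\cong\mathbb{P}^1$ is a section of an elliptic fibration on the K3 surface $X$, and $f|_{\tilde{E}_9}:\tilde{E}_9\to B$ is a surjective morphism onto the positive-dimensional curve $B$, so $B\cong\mathbb{P}^1$. I do not anticipate any real obstacle; the only ingredient that requires the specific hypothesis on $P$ is the identification $\iota(\boxminus P)=P$, which is exactly the content of Proposition~\ref{p -p} on anti-invariant sections.
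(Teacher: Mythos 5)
Your argument is correct and follows essentially the same route as the paper: both proofs unpack $\tau=\iota\circ(\boxminus P)$ fiberwise, use the $\iota^*$-anti-invariance of $P$ to get $\iota(\boxminus P)=P$ and the fact that $\tilde{E}_9$ is pulled back from $S$ to get $\iota(\tilde{E}_9)=\tilde{E}_9$, and conclude that $\tau$ swaps the two sections. Your only addition is the explicit projection-formula computation $f^*(B)\cdot f^*(Y_t)=4$ for the bisection degree, which the paper leaves implicit; that step is fine.

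One small but genuine misstatement: from the surjection $f|_{\tilde{E}_9}:\tilde{E}_9\to B$ you may conclude that $B$ is a \emph{rational} curve (its normalization is $\mathbb{P}^1$), but not that $B\cong\mathbb{P}^1$. Indeed $B=B_{Y,m}$ has arithmetic genus $m$ and is singular for $m\geq 1$ (its nodes come from the points of $\tilde{E}_9\cap P$), which is precisely the point of the rest of the paper. Since the lemma only claims rationality, this does not affect the validity of your proof, but the isomorphism claim should be dropped.
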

\begin{proof}
    Let $X_t$ and $X_{-t}$ be two twin fibers. The involution $\tau$ acts on a point $x_t\in X_t$ in the following way:\begin{center}
        $\tau(x_t)=\iota\circ\boxminus P(x_t)=\iota(x_t\boxminus P_t)=x_{-t}\boxplus P_{-t}$.
    \end{center}A point $0_t\in\tilde{E}_9$ is sent to $0_{-t}\boxplus P_{-t}=P_{-t}$ and a point $P_t\in P$ is sent to $P_{-t}\boxminus P_{-t}=0_{-t}$, where in this latter case the sign is changed because $P$ is $\iota^*$ anti-invariant, whence $\iota(P_t)=\boxminus P_{-t}$. Finally, since $P$ and $\tilde{E}_9$ are rational, then their image is. 
\end{proof}

The construction strongly depends on the choice of the $\iota^*$ anti-invariant section $P$. One could ask if such a section actually exists. The proof of their existence is performed by Hulek and Sch\"utt in a lattice-theoretical way (see \cite[Section 3]{HS}). In particular, the following Theorem collects their results in this context.
\begin{theorem}[Hulek, Sch\"utt]\label{sigmam}
    For every nonnegative integer $m\in\mathbb{Z}_+$, there exists a 9-dimensional family $\mathcal{F}_m$ of K3 surfaces such that, for every $X_m\in\mathcal{F}_m$, \begin{center}
    $\ns(X_m)\cong U\oplus E_8(-1)\oplus <-4(m+1)>$. 
\end{center}Moreover, $X_m$ covers an Enriques surface of base change type such that, with the previous notations, $P\cdot\tilde{E}_9=2m$.
\end{theorem}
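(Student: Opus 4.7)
The plan is to give a purely geometric construction of $X_m$ by descending the problem to the rational elliptic surface $S$ and identifying $\mathcal{F}_m$ with a Severi variety of rational bisections. The key is to translate the existence of an $\iota^*$-anti-invariant section $P\subset X$ with $P\cdot\tilde{E}_9=2m$ into the existence of a rational origin cutting bisection $\mathcal{B}_m\subset S$ (of class as in Example \ref{invbis}) whose degree-$2$ structure over $\mathbb{P}^1$ has branch locus exactly $\{t_0,t_\infty\}$. Indeed, by Proposition \ref{p -p} any anti-invariant $P$ projects to an origin cutting bisection, and conversely $g^*\mathcal{B}_m$ splits into two sections $P+\boxminus P$ exchanged by $\iota$ precisely when the normalization of $\mathcal{B}_m$ and the base change $g$ are isomorphic as degree-$2$ covers of $\mathbb{P}^1$.

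First I would establish the existence of rational representatives in $|\mathcal{B}_m|$. By adjunction the arithmetic genus is $g_a=4m+2$, and by Riemann-Roch together with Kodaira vanishing one finds $\dim|\mathcal{B}_m|=4m+3$, so the Severi variety of rational members has expected dimension $1$. Its nonemptiness would be reduced, via the quotient $q:S\to\mathbb{F}_2$ of Proposition \ref{quot}, to a Severi variety computation on $\mathbb{F}_2$ for the image class $q_*\mathcal{B}_m$; this is a logarithmic Severi variety computation of the type developed in Section \ref{sec5}. The constraint that all nodes of $\mathcal{B}_m$ must lie on $\mathcal{H}_2\cup E_9$ (Lemma \ref{mpl}) pins down the structure of the degeneration and determines how many nodes live on each of the two fixed curves.

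Given a rational $\mathcal{B}_m$, its normalization is a degree-$2$ cover of $\mathbb{P}^1$ branched at exactly $2$ points by Riemann-Hurwitz, and I would choose $g$ to be the degree-$2$ cover ramified exactly over those two points. Then $g^*\mathcal{B}_m=P+\boxminus P$ with $P$ anti-invariant, and for generic $\mathcal{B}_m$ the section $P$ avoids $\tilde{E}_9$ above $X_{t_0}$ and $X_{t_\infty}$. Proposition \ref{enrinv} then yields the Enriques involution $\tau=\iota\circ(\boxminus P)$, defining $Y_m=X_m/\tau$. The equality $P\cdot\tilde{E}_9=2m$ follows from the projection formula $g^*\mathcal{B}_m\cdot\tilde{E}_9=2(\mathcal{B}_m\cdot E_9)=4m$ together with the symmetry $P\cdot\tilde{E}_9=\boxminus P\cdot\tilde{E}_9$ forced by $\iota$-invariance of $\tilde{E}_9$.

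The dimension of $\mathcal{F}_m$ is then $8+1=9$, coming from the moduli of the general rational elliptic surface $S$ and the one-parameter family of rational $\mathcal{B}_m$ (the base change $g$ is determined by the branch locus of $\mathcal{B}_m$). For the Néron-Severi lattice, the pullbacks of $\tilde{E}_9, F, \tilde{E}_1,\dots,\tilde{E}_8$ generate a sublattice isomorphic to $U\oplus E_8(-1)$, and adjoining $P$ contributes an orthogonal rank-$1$ summand of self-intersection $-4(m+1)$, computed from $P^2=-2$, $P\cdot\tilde{E}_9=2m$, $P\cdot F=1$ by removing the projection of $P$ onto $U\oplus E_8(-1)$. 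The main obstacle is the Severi variety nonemptiness claimed in the second paragraph: this is precisely the geometric input the paper develops, via logarithmic Severi varieties, in Section \ref{sec5}.
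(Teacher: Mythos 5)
Your outline reproduces the geometric strategy that the paper itself develops in Section \ref{sec5} as an alternative to the proof it actually cites for this statement (the lattice-theoretic argument of \cite[Section 3]{HS}). The translation into the existence of a rational origin cutting bisection, the genus and dimension counts, and the computation $P\cdot\tilde{E}_9=2m$ via the projection formula are all correct. But the proposal has a genuine gap, and you name it yourself: the nonemptiness of the Severi variety of irreducible rational members of $|\mathcal{B}_m|$ is deferred to ``a Severi variety computation on $\mathbb{F}_2$'' that is never carried out. This is not a routine step one can wave at: Dedieu's Theorem \ref{dedieu} only controls the dimension and local geometry of components of $V_{\alpha}(\mathbb{F}_2,e+h_2,b_m)$ \emph{once they are nonempty}, and there is no off-the-shelf nonemptiness result for logarithmic Severi varieties with prescribed tangency to the \emph{specific} curve $e+h_2$. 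The paper supplies the missing input in Theorem \ref{distinct} by an explicit construction: by \cite[Theorem 5.2]{Costa} there is a section $E_m\in\mw(S)$ with $E_m\cdot E_9=m$, and its image $e_m=q(E_m)\in|b_m|$ lies in $V_{b_m}^{h_2}(\mathbb{F}_2)$; the general member of the resulting one-dimensional family then pulls back to the desired irreducible rational bisection. Without an analogue of this seed curve, your argument establishes nothing about existence, which is the entire content of the theorem.

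Two further points are asserted where they need proof. First, the Enriques condition of Proposition \ref{enrinv} requires that $P$ miss $\tilde{E}_9$ over $X_{t_0}$ and $X_{t_\infty}$; ``for generic $\mathcal{B}_m$'' is not enough, since by Remark \ref{remkodd} the bisection is forced to be tangent to the two branch fibers along $E_9\cup\mathcal{H}_2$, and one must show the tangency can be arranged along $\mathcal{H}_2$ --- this is exactly why the paper builds the two transverse intersection points with $\mathcal{H}_2$ into the definition of $V_{\mathcal{B}_m}^{\mathcal{H}_2}(S)$. Second, the lattice computation at the end is wrong as stated: after a quadratic base change the height pairing doubles, so $\tilde{E}_9, F, \tilde{E}_1,\dots,\tilde{E}_8$ generate a copy of $U\oplus E_8(-2)$ (as the paper records for the base change very general $X$), not $U\oplus E_8(-1)$. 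Identifying $\ns(X_m)$ with the overlattice $U\oplus E_8(-1)\oplus\langle -4(m+1)\rangle$ requires the discriminant-form analysis of Hulek--Sch\"utt, which neither your sketch nor the paper's geometric construction replaces.
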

\begin{remark}\label{P odd}
    The group of sections $\mw(X_m)$ is generated by the pullbacks $\tilde{E}_i$ and $P$. One could choose $P^{\boxplus k}$ (with $k$ odd for reasons that we explain in Remark \ref{remkodd}) to define the Enriques involution. To avoid confusion in the notations, we will choose the section $P$ to be not divisible in $\mw(X)$. 
\end{remark}

\begin{corollary}
    The Enriques surfaces of base change type are not Picard very general.
\end{corollary}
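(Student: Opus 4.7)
The plan is to derive the corollary directly from Theorem \ref{sigmam} by a Picard rank count, using the characterization of Picard very generality given in the remark following Definition \ref{verygenenr}.

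First I would recall that, by that remark, an Enriques surface $Y$ is Picard very general if and only if its K3 universal cover $X$ has Picard rank exactly $10$, which is the minimal possible rank for a K3 surface admitting an Enriques involution (since the Enriques lattice $U \oplus E_8(-1)$ must embed, after multiplication of the form by two, primitively into $\ns(X)$).

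Next I would invoke Theorem \ref{sigmam}: for every $m \in \mathbb{Z}_+$, any K3 surface $X_m \in \mathcal{F}_m$ covering an Enriques surface of base change type satisfies
\[
\ns(X_m) \cong U \oplus E_8(-1) \oplus \langle -4(m+1)\rangle.
\]
Reading off the ranks of the three summands gives $\rho(X_m) = 2 + 8 + 1 = 11$.

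Since $11 > 10$, the Picard rank of the universal K3 cover of any Enriques surface $Y_m$ of base change type is strictly larger than $10$, so $Y_m$ fails the criterion of Definition \ref{verygenenr} and is therefore not Picard very general. There is no real obstacle here: the corollary is essentially a bookkeeping consequence of the rank of the lattice produced by the base change construction, and the extra rank $1$ summand $\langle -4(m+1)\rangle$ (geometrically accounting for the class of the anti-invariant section $P$) is exactly what prevents Picard very generality.
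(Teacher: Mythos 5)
Your argument is correct and coincides with the paper's own proof: both invoke Theorem \ref{sigmam} to identify $\ns(X_m)\cong U\oplus E_8(-1)\oplus\langle -4(m+1)\rangle$, read off $\rho(X_m)=11$, and conclude via the characterization of Picard very generality as $\rho(X)=10$. No further comment is needed.
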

\begin{proof}
    By Theorem \ref{sigmam}, the Neròn-Severi group of $X_m\in\Sigma_m$ is \begin{center}
    $\ns(X_m)\cong U\oplus E_8(-1)\oplus <-4(m+1)>$, 
\end{center} and in particular $\rho(X_m)=11$. 
\end{proof}

\begin{definition}We denote by $B_{Y,m}$ the induced rational bisection on $Y_m$ and we say that $B_{Y,m}$ is an $m$-special curve for $Y_m$. Sometimes, we shall say that $Y_m$ is an $m$-special Enriques surface and that the induced genus 1 pencil on $Y_m$ is an $m$-special genus 1 pencil. \end{definition}

Since $B_{Y,m}^2=\frac{1}{2}(P+\tilde{E}_9)^2=\frac{1}{2}(4m-4)=2m-2$, the arithmetic genus of $B_{Y,m}$ is $\rho_a(B_{Y,m})=\frac{1}{2}B_{Y,m}^2-1=m$.

The next Theorem links the rational bisection $B_{Y,m}$ to the rational elliptic surface $S$ we started with to construct $Y_m$. More precisely, we are going to prove that the image of $P$ on $S$ under the quotient $g$ is an origin cutting bisection.

\begin{theorem}\label{bybs}
    Let $S\cong\bl_{\{P_1,\dots,P_9\}}\mathbb{P}^2$ be a general rational elliptic surface, and let $X_m$ and $Y_m$ be a K3 surface and an Enriques surface obtained by the base change construction. Let then $g:X_m\rightarrow S$ and $f:X_m\rightarrow Y_m$ denote the corresponding quotients. Finally, let $B_{Y,m}$ be the $m$-special curve of $Y_m$ and let $P$ and $\tilde{E}_9$ be the two components of its preimage under $f$. Then, $B_{S,m}:=g(P)$ is a rational bisection for the elliptic pencil on $S$. Moreover, $B_{S,m}\sim\mathcal{B}_m:= 6(m+1)L-2(m+1)E_1-\dots-2(m+1)E_8-2mE_9$ is an origin cutting bisection. 
\end{theorem}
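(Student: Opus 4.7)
The plan is to deduce the claim in three steps: first show $g(P)$ is a rational bisection; then verify it is origin cutting; finally pin down its class via Proposition \ref{forminv}.

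For the first step, I would check that $g|_P$ is birational onto its image. Anti-invariance gives $\iota(P) = \boxminus P$; since $P \cdot \tilde{E}_9 = 2m > 0$ and torsion sections are disjoint from $\tilde{E}_9$ by Remark \ref{hke9}, $P$ is not $2$-torsion, so $\iota(P) \neq P$ and $g_*(P) = g(P)$ as cycles. The projection formula together with $g^*(F) = X_t + X_{-t}$ for a general fiber $F$ of $S$ yields
\[
 g(P) \cdot F \;=\; P \cdot g^*(F) \;=\; P \cdot X_t + P \cdot X_{-t} \;=\; 2,
\]
so $g(P)$ is a bisection of the elliptic pencil on $S$; rationality is automatic as $P \cong \mathbb{P}^1$.

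For the origin cutting property, I would pick a general $t \in \mathbb{P}^1$, with preimages $t_\pm \in \mathbb{P}^1$ under the base change, and write $P \cap X_{t_\pm} = \{\tilde{Q}_\pm\}$, $Q_\pm := g(\tilde{Q}_\pm) \in S_t$. The restriction $g\vert_{X_{t_-}} \colon X_{t_-} \to S_t$ is an isomorphism of elliptic curves sending $\tilde{E}_9 \cap X_{t_-}$ to $0_t := E_9 \cap S_t$, hence a group isomorphism. Anti-invariance $\iota(P) = \boxminus P$ reads fiberwise as $\iota(\tilde{Q}_+) = \boxminus \tilde{Q}_-$ inside $X_{t_-}$; applying $g$ with $g \circ \iota = g$ gives $Q_+ = \boxminus Q_-$ in $S_t$, i.e.\ $Q_+ \boxplus Q_- = 0_t$. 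Since $B_{S,m} := g(P)$ intersects $S_t$ precisely in $\{Q_+, Q_-\}$, this proves the origin cutting property.

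For the linear class, $B_{S,m}$ is irreducible and distinct from $E_9$, so Proposition \ref{forminv} gives
\[
 B_{S,m} \sim 3(c+2)L - (c+2)(E_1 + \cdots + E_8) - cE_9
\]
for some $c \in \mathbb{Z}_+$. Intersecting with $E_9$ and using $\tilde{E}_9 = g^*(E_9)$ (by the definition of $\tilde{E}_9$ in Section \ref{sec4}) together with the projection formula gives
\[
 B_{S,m} \cdot E_9 \;=\; g_*(P) \cdot E_9 \;=\; P \cdot \tilde{E}_9 \;=\; 2m,
\]
while the explicit class gives $B_{S,m} \cdot E_9 = c$, so $c = 2m$, matching $\mathcal{B}_m$. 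I expect the main obstacle to be the bookkeeping in the origin cutting step: one must carefully transport the group laws on the twin fibers $X_{t_\pm}$ (with origin on $\tilde{E}_9$) through $\iota$ and $g$ to the group law on $S_t$ (with origin on $E_9$), and check that anti-invariance of $P$ produces opposite points in $S_t$.
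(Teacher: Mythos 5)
Your proposal is correct and follows essentially the same route as the paper: anti-invariance of $P$ forces opposite points in twin fibers, which descend under $g$ to opposite points of $S_t$ with respect to $E_9$, giving the origin cutting property, after which the class is pinned down by Proposition \ref{forminv} together with $B_{S,m}\cdot E_9=P\cdot\tilde{E}_9=2m$. You simply make explicit two steps the paper leaves implicit (the projection-formula computation of the bisection degree and of the coefficient $c=2m$), which is a welcome amount of extra care rather than a different argument.
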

\begin{proof}
    The section $P$ is anti-invariant with respect to $\iota^*$, then it cuts opposite points $\tilde{Q}_t$ and $\boxminus \tilde{Q}_{-t}$ in twin fibers $X_t$ and $X_{-t}$ with respect to $\tilde{E}_9$. Hence, $B_{S,m}$ cuts the fiber $S_t=g(X_t)=g(X_{-t})$ in the opposite points $Q_t$ and $\boxminus Q_t$ with respect to $E_9$ and then it is an origin cutting bisection. More over, since $P$ is rational, then $B_{S,m}$ is.
\end{proof}

In the following Remark, we give a geometrical interpretation of the phenomenon.
\begin{remark}\label{remkodd}
 By Proposition \ref{p -p}, the bisection $B_{S,m}\subset S$ splits in $P$ and $\boxminus P$ in $X_m$. This means that $B_{S,m}$ is tangent to the branch locus of $g$, that is the union of $S_{t_0}$ and $S_{t_{\infty}}$. 
Geometrically, since $B_{S,m}$ is a bisection for the elliptic fibration on $S$, it carries a $2:1$ map over $\mathbb{P}^1$. By the Riemann-Hurwitz formula, it has two ramification points, that correspond exactly to the two fixed fibers $S_{t_0}$ and $S_{t_{\infty}}$, to which $B_{S,m}$ is tangent. \par
Since $B_{S,m}$ is origin cutting, it has to be tangent to $S_{t_0}$ and $S_{t_{\infty}}$ along the 2-torsion trisection $\mathcal{H}_2$ or along $E_9$. It is easy to see that it is tangent to the fixed locus along $\mathcal{H}_2$: indeed, by Proposition \ref{enrinv}, to produce an Enriques involution $\tau$, $P$ cannot intersect $E_9$ along the fixed locus. For this reason in Remark \ref{P odd} we claimed that one could choose also $P^{\boxplus k}$ as section anti-invariant with respect to $\iota^*$ to define an Enriques involution, but with $k$ odd: if $k$ is even, then $P^{\boxplus k}$ intersects $E_9$ along the fixed locus.
\end{remark}

\begin{corollary}\label{freeh2}
The set of irreducible rational curves in $|\mathcal{B}_m|$ intersecting $E_9$ in simple nodes and tacnodes is nonempty.\end{corollary}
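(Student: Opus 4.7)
The plan is to produce the desired curve directly from the Hulek--Sch\"utt setup of Section \ref{sec4}. Fix any K3 surface $X_m\in\mathcal{F}_m$ provided by Theorem \ref{sigmam}, with its base change $g:X_m\to S$ and $\iota^*$-anti-invariant section $P$; Theorem \ref{bybs} then hands us an irreducible rational bisection $B_{S,m}=g(P)\in|\mathcal{B}_m|$. It only remains to analyze the singularities of $B_{S,m}$ that lie on $E_9$.

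The key observation is that $g^*B_{S,m}=P+\boxminus P$, with $P$ and $\boxminus P$ smooth rational sections of the elliptic fibration on $X_m$. By $\iota^*$-anti-invariance, $P$ cuts opposite points in twin fibers, so whenever $P$ meets $\tilde{E}_9$ at the origin $0_t\in X_t$, both $\boxminus P$ passes through $0_t$ (the origin being its own opposite) and $P$ passes through the twin origin $0_{-t}=\iota(0_t)$. Thus the $2m$ points of $P\cap\tilde{E}_9$ are organized into $m$ pairs $\{0_t,0_{-t}\}$, and $g$ identifies each pair into a single point of $E_9\cap B_{S,m}$; for a generic base change these $m$ image points are distinct, exhausting the intersection $E_9\cdot B_{S,m}=2m$.

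By Proposition \ref{enrinv}, the points of $P\cap\tilde{E}_9$ lie away from the ramification fibers $X_{t_0},X_{t_\infty}$, so $g$ is \'etale near each such $x$ and the two branches of $B_{S,m}$ at the image point $g(x)$ are the smooth \'etale images of the germs of $P$ at $x$ and of $\boxminus P$ at $x$. A double point formed by two smooth branches is an $A_{2k-1}$ singularity with local intersection multiplicity $(P\cdot\boxminus P)_x$; in particular it is a simple node when this multiplicity equals $1$ and a tacnode when it equals $2$. The main obstacle is to verify that, for a sufficiently general $X_m\in\mathcal{F}_m$, this multiplicity is at most $2$ at every $x\in P\cap\tilde{E}_9$, so that no higher-order tangency forces a worse singularity on $E_9$. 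This should follow from a dimension count on the nine-dimensional family $\mathcal{F}_m$, since imposing tangency of order $\ge 3$ between $P$ and $\boxminus P$ at a point of $\tilde{E}_9$ is a closed condition of positive codimension; everything else is a direct consequence of Theorem \ref{bybs}.
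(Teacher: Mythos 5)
Your construction of the curve and your analysis of its singularities along $E_9$ follow the paper's proof of Corollary \ref{freeh2} very closely: the paper likewise takes $B_{S,m}=g(P)$ from the Hulek--Sch\"utt construction (Theorems \ref{sigmam} and \ref{bybs}), observes that $P$ and $\boxminus P$ pass through the same points of $\tilde E_9$ (an origin being its own opposite), and deduces that the intersection of $B_{S,m}$ with $E_9$ consists of double points with two smooth branches, hence nodes in the generalized sense and not cusps, ``otherwise it could not split.'' Where you diverge is at the very end: the paper stops at this purely local observation and immediately concludes ``simple nodes and tacnodes,'' deferring all genuine transversality statements to Theorem \ref{distinct}, where they come from Dedieu's theorem on logarithmic Severi varieties; you instead explicitly isolate the remaining issue --- excluding branch contact of order $\geq 3$, i.e.\ singularities $A_{2k-1}$ with $k\geq 3$ --- and propose to settle it by a dimension count on $\mathcal{F}_m$.

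That final step is the gap. Saying that tangency of order $\geq 3$ between $P$ and $\boxminus P$ along $\tilde E_9$ ``is a closed condition of positive codimension'' is precisely what has to be proved: a closed condition has positive codimension only if it is not identically satisfied on the family, so you would need either to exhibit one member where every contact is $\leq 2$ (which is essentially the nonemptiness you are trying to establish) or to bound the dimension of the bad locus, and you do neither. A cleaner route is available from your own setup: since $(-1)$ fixes $\tilde E_9$ pointwise and swaps $P$ with $\boxminus P$, one has $(P\cdot\boxminus P)_x=(P\cdot\tilde E_9)_x$ at each $x\in P\cap\tilde E_9$, and $\sum_x(P\cdot\tilde E_9)_x=2m$ with the points paired by $\iota$; this rules out contact $\geq 3$ outright for $m\leq 2$, but for $m\geq 3$ some argument is genuinely required. (In fairness, the paper's own jump from ``two smooth branches'' to ``simple nodes and tacnodes'' elides the same point; but the paper commits to a complete local argument, whereas yours openly leaves the step as an unexecuted sketch.)
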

\begin{proof}
    $P$ and $\boxminus P$ cut $\tilde{E}_9$ in the same points: in fact, $P$ and $\boxminus P$ cut a fiber $X_t$ in opposite points, and if $P$ meets $X_t$ along $\tilde{E}_9$, the intersection point is the origin of the group law of the fiber and whence it coincides with its opposite point. This corresponds to a double point of $B_{S,m}$ along $E_9$. Since $P\cdot E_9=2m$, we have that $B_{S,m}\cdot E_9=2m$ and that this latter intersection is composed by (possible not ordinary) double points. The singularities of $B_{S,m}$ along $E_9$ can just be (possibly not ordinary) nodes but not cusps. Otherwise, it could not split. This means that the possible singularities are simple nodes and tacnodes.
\end{proof}
Notice that the presence of a tacnode would imply that $P$ (as well as $\boxminus P)$ and $\tilde{E}_9$ are tangent in $X_m$ and then that also the $m$-special curve $B_{Y,m}$ would have a tacnode.
One of the aims of the next section is to prove that generically this not happens, meaning that generically the $m$-special curves are nodal.

\begin{remark}[Description of the families]\label{descr}

An Enriques surface is called nodal if it contains a nodal curve, i.e. a curve of self-intersection $-2$, that turns out to be rational and smooth. On the K3 cover, such a curve splits into two disjoint smooth rational curves, again with self-intersection $-2$. For the case $P\cap E_9=\emptyset$, whence $m=0$, the construction due to Hulek and Sch\"utt thus leads exactly to special Enriques surfaces. \par
$\Sigma_1$ parametrizes K3 surfaces covering Enriques surfaces with a $1$-special curve $B_Y$, that is a rational curve of arithmetic genus 1. In other words, $B_Y^2=0$ and then either $|B_Y|$ or $|2B_Y|$ is a genus 1 pencil on $Y_1$, with $B_Y$ as one of its singular fibers. As usual $f:X_1\rightarrow Y_1$ indicates the Enriques quotient. Since $B_Y$ splits in $X_1$ in two $(-2)$-curves meeting at two points, it is 2:1 covered by a member of $|f^*(B_Y)|$ and thus it is an half-fiber of the genus 1 pencil $|2B_Y|$. The $Y_1$'s are precisely the Enriques surfaces having a genus 1 pencil with a nodal half-fiber: it is not surprising that they live in a subfamily of codimension 1 in the moduli space of the Enriques surfaces.\par

By Theorem \ref{galknu}, every rational curve in a very general Enriques surface is 2-divisible. For $m\geq 2$, we have that $X_m$ parametrizes K3 surfaces covering Enriques surfaces $Y_m$ having at least a linear system $L$ with $\rho_a(L)=m$ and an elliptic pencil $F$ with $L\cdot F=2$, with a rational member. In particular, if $E$ is one of the two half-fibers of $F$ (i.e. $2E\sim F$), we have $L\cdot E=1$. Thus, the peculiarity of $Y_m$ is the existence of a not $2$-divisible linear system of arithmetic genus $m$ having a rational member.
\end{remark}

\section{Logarithmic Severi varieties \label{sec5}}

In this section we recall the basics about Severi varieties of curves on surfaces, particularly focusing on the case of Enriques surfaces. Later, we give the notion of logarithmic Severi variety of curves on a fixed surface. Once clarified the connection between the existence of the Enriques surfaces of base change type and the nonemptiness of some Severi varieties of curves on rational elliptic surfaces and the second Hirzebruch surface $\mathbb{F}_2$, we prove Theorem \ref{nodal} and \ref{nodalk}.
Let $S$ be a smooth complex projective surface and $L$ a line bundle on $S$ such that the complete linear system $|L|$ contains smooth, irreducible curves (such a line bundle,
or linear system, is often called a Bertini system). Let
\begin{center}
    $\rho:=\rho_a(L)=\frac{1}{2}L\cdot(L+K_S)+1$
\end{center} be the arithmetic genus of any curve in $|L|$.

\begin{definition}\label{severi}
    For any integer $0\leq\delta\leq\rho$, consider the locally closed, functorially defined subscheme of $|L|$ \begin{center}
        $V_{|L|,\delta}(S)$ or simply $V_{|L|,\delta}$
    \end{center}parametrizing irreducible curves in $|L|$ having only $\delta$ nodes as singularities: this is called the \textit{Severi variety} of $\delta$-nodal curves in $|L|$. We will let $g:=\rho-\delta$ be the geometric genus of the curves in $V_{|L|,\delta}$.
\end{definition}

 It is well-known that, if $V_{|L|,\delta}$ is nonempty, then all of its irreducible components $V$ have dimension $\dim(V)\geq\dim|L|-\delta$. If $V_{|L|,\delta}$ is smooth of dimension $\dim|L|-\delta$ at $[C]$ it is said to be \textit{regular at $[C]$}. An irreducible component $V$ of $V_{|L|,\delta}$ will be said to be \textit{regular} if the condition of regularity is satisfied at any of its points, equivalently, if it is smooth of dimension $\dim|L|-\delta$.\par
In this section, we contribute in the study of the Severi varieties of curves on Enriques surfaces and rational elliptic surfaces. We give the state of the art about the Severi varieties of curves on these surfaces. About the Enriques case, we refer to \cite{CDGK} and \cite{CDGK2}. 
The next result states the main property of the Severi varieties on Enriques surfaces.\par
Let $Y$ be an Enriques surfaces, $X$ be its K3 cover, $f:X\rightarrow Y$ denote the quotient map and $\tau$ denote the Enriques involution. 
Let now $V$ be an irreducible component of a Severi varieties of $\delta$-nodal curves on $Y$. Ciliberto, Dedieu, Galati and Knutsen in \cite[Proposition 1]{CDGK} prove that if $V$ is regular, then the curves in $V$ are covered by irreducible curves of $X$, while if $V$ is nonregular, then each curve $C$ of $V$ splits in $X$ in two curves $C'$ and $C''$. Moreover, they show that if $Y$ is very general in moduli, then $C'$ and $C''$ are linearly equivalent. 

In particular, a nodal rational curve in the very general Enriques surface split in two linearly equivalent rational curves on the K3 cover.\par
Recall that any rational elliptic surface is isomorphic to the blow-up of $\mathbb{P}^2$ in nine points that are base points of a pencil of cubics. There are some general results about the Severi varieties on curves in blown-up planes (for example \cite{GLS}), but in the setting in which the blown-up points are in general position. This does not cover our case, in which $P_1,\dots,P_9$ are base points of a pencil of cubic curves.

\subsection{\rm T\sc he m-special curves}

As noticed at the end of the previous section, proving that $B_{Y,m}$ is nodal, consequently that $P$ (as well as $\boxminus P$) and $E_9$ intersect transversely, is equivalent to proving that the singularities of $B_{S,m}$ lying along $E_9$ are simple nodes.

\begin{definition}Let $S\cong\bl_{\{P_1\dots,P_9\}}\mathbb{P}^2$ be a general rational elliptic surface. We call $V_{\mathcal{B}_m}^{\mathcal{H}_2}(S)\subset V_{|\mathcal{B}_m|,4m+2}(S)$ the Severi variety of irreducible rational curves in $|\mathcal{B}_m|$ with two simple intersection points with $\mathcal{H}_2$. \end{definition}

The Definition of $V_{\mathcal{B}_m}^{\mathcal{H}_2}(S)$ is motivated by the following Remark.

\begin{remark}\label{severires}
    In the Example \ref{invbis} we showed that an origin cutting bisection for a rational elliptic surface $S\rightarrow\mathbb{P}^1$ is of the form \begin{center} $\mathcal{B}_m\sim 6(m+1)L-2(m+1)E_1-\dots-2(m+1)E_8-2mE_9$.\end{center}
    The arithmetic genus of $\mathcal{B}_m$ is \begin{center}
       $\rho(\mathcal{B}_m)=\frac{1}{2}[(6m+5)(6m+4)-8(2m+2)(2m+1)-2m(2m-1)])=4m+2$. 
    \end{center}
    and by Proposition \ref{mpl} the double points of a curve in $|\mathcal{B}_m|$ belong to $\mathcal{H}_2$ or to $E_9$. 
    Moreover,
    \begin{center}
        $\mathcal{B}_m\cdot E_9=2m$  and  $\mathcal{B}_m\cdot\mathcal{H}_2=6(m+1)$.
    \end{center} Let now $B_{S,m}$ be a rational member of $\mathcal{B}_m$ and let us assume that the singularities of $B_{S,m}$ are simple nodes. We have $\rho(B_{S,m})=4m+2$, whence it has $4m+2$ nodes. Since \begin{center} $B_{S,m}\cdot E_9+B_{S,m}\cdot \mathcal{H}_2=8m+6$, \end{center} two free intersection points $Q_1$ and $Q_2$ between $B_{S,m}$ and $\mathcal{H}_2+E_9$ remain. Since $B_{S,m}$ can just have double points as singularities (see Remark \ref{even}) and $B_{S,m}\cdot E_9$ and $B_{S,m}\cdot\mathcal{H}_2$ are even, we have that $Q_1$ and $Q_2$ belong both to $E_9$ or to $\mathcal{H}_2$. In the proof of Corollary \ref{freeh2} we saw that the rational bisections of our interest are the ones intersecting $E_9$ in double points, so we consider the latter case.
    
\end{remark}

For the discussion above, proving the existence of the Enriques surfaces of base change type is equivalent to showing the nonemptiness of $V_{\mathcal{B}_m}^{\mathcal{H}_2}(S)$.
In order to prove that $V_{\mathcal{B}_m}^{\mathcal{H}_2}(S)$ is nonempty, we transfer the problem to the quotient $S/(-1)\cong\mathbb{F}_2$. Once again, the double points of $B_{S,m}$ all belong to $E_9+\mathcal{H}_2$, that is the ramification locus of the quotient map $q$ (as seen while introducing Proposition \ref{quot}), whence the image of $B_{S,m}$, which we will see in Lemma \ref{bisf2} below is a smooth rational curve, is tangent to the branch locus at the images of the singular points.\par

For this reason, we introduce the so-called \textit{logarithmic Severi varieties}, parametrizing curves with given tangency conditions to a fixed curve. The definition and the main results are given by Dedieu in \cite{De} and they are based on the works of Caporaso and Harris (see for example \cite{CH}). \par

Let us denote by $\underline{N}$ the set of sequences $\alpha=[\alpha_1,\alpha_2,\dots]$ of nonnegative integers with all but finitely many $\alpha_i$ non-zero. In practice we shall omit the infinitely many zeroes at the end. For $\alpha\in\underline{N}$, we let \begin{center}
    $|\alpha|=\alpha_1+\alpha_2+\dots$\\
    $\mathcal{I}\alpha=\alpha_1+2\alpha_2+\dots+n\alpha_n+\dots$
\end{center}

\begin{definition}\label{logsev}
Let $S$ be a smooth projective surface, $T\subset S$ a smooth, irreducible curve
and $L$ a line bundle or a divisor class on $S$ with arithmetic genus $\rho$. Let $\delta$ be an integer satisfying $0\leq\delta\leq\rho$, let $\alpha\in\underline{N}$ such that \begin{center}
    $\mathcal{I}\alpha=L\cdot T$.
\end{center} We denote by $V_{\delta,\alpha}(S,T,L)$ the locus of curves in $L$ such that \begin{itemize}
    \item $C$ is irreducible of geometric genus $\delta$ and algebraically equivalent to $L$,
    \item denoting by $\mu:\tilde{C}\rightarrow S$ the normalization of $C$ composed with the inclusion $C\subset S$, there exists $|\alpha|$ points $Q_{i,j}\in C$, $1\leq j\leq\alpha_i$ such that \begin{center}
        $\mu^*T=\sum\limits_{1\leq j\leq\alpha_i}iQ_{i,j}$.
    \end{center}
\end{itemize}
\end{definition}

\begin{theorem}[Dedieu]\label{dedieu}
    Let $V$ be an irreducible component of $V_{\delta,\alpha}(S,T,L)$, $[C]$ a general member of $V$ and $\mu:\tilde{C}\rightarrow S$ its normalization as in the Definition \ref{logsev}. Let now $Q_{i,j}$, $1\leq j\leq\alpha_i$ points in $\tilde{C}$ such that \begin{center}$\mathcal{I}\alpha=L\cdot T$ \end{center} and set \begin{center}$D=\sum\limits_{1\leq j\leq\alpha_i} (i-1)Q_{i,j}$.\end{center}
    \begin{itemize}
        \item[(i)] If $-K_S\cdot C_i-$deg $\mu_*D_{|C_i|}\geq 1$ for every irreducible component $C_i$ of $C$, then \begin{center}
            $\dim(V)=-(K_S+T)\cdot L+\delta-1+|\alpha|$
        \end{center}
        \item[(ii)] If $-K_S\cdot C_i-$deg $\mu_*D_{|C_i|}\geq 2$ for every irreducible component $C_i$ of $C$, then \begin{itemize}
            \item[(a)] the normalization map $\mu$ is an immersion, except possibly at the points $Q_{i,j}$;
            \item[(b)] the points $Q_{i,j}$ of $\tilde{C}$ are pairwise distinct;
            \item[(c)] for every curve $G\subset S$, if $[C]$ is general with respect to $G$ then $C$ intersects $G$ transversely. 
        \end{itemize}

    \end{itemize}
\end{theorem}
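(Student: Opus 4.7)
The plan is a deformation-theoretic analysis of $V = V_{\delta,\alpha}(S,T,L)$ at a general point $[C]$, following the Caporaso--Harris framework as refined by Dedieu. Given $[C] \in V$ with normalization $\mu: \tilde{C} \to C \subset S$ and setting $D = \sum_{i,j}(i-1)Q_{i,j}$ on $\tilde{C}$, the first step is to identify
\[
T_{[C]} V \;\cong\; H^0\bigl(\tilde{C},\, N_\mu(-D)\bigr),
\]
where $N_\mu$ is the normal sheaf of the map $\mu$. Intuitively, first-order deformations of $C$ in $|L|$ that preserve its geometric genus $\delta$ correspond to sections of $N_\mu$, and the prescribed tangency profile translates into vanishing of order $(i-1)$ at each $Q_{i,j}$.

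With that identification, Riemann--Roch on $\tilde{C}$ gives the Euler characteristic immediately. From the exact sequence $0 \to T_{\tilde{C}} \to \mu^*T_S \to N_\mu \to 0$ one computes $\deg N_\mu = -K_S \cdot L + 2\delta - 2$, while $\deg D = \mathcal{I}\alpha - |\alpha| = L \cdot T - |\alpha|$ by the constraint of Definition \ref{logsev}. Plugging in gives
\[
\chi\bigl(\tilde{C}, N_\mu(-D)\bigr) \;=\; -(K_S + T) \cdot L + \delta - 1 + |\alpha|,
\]
matching the target dimension. For part (i) it remains to prove $H^1(\tilde{C}, N_\mu(-D)) = 0$. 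By Serre duality this amounts to $H^0(\omega_{\tilde{C}} \otimes N_\mu^{-1}(D)) = 0$, and the degree of this line bundle restricted to a component $\tilde{C}_i$ works out to $K_S \cdot C_i + \deg \mu_*D_{|C_i|}$, which is $\leq -1$ under the hypothesis. The claimed dimension then follows by combining the upper bound $\dim T_{[C]}V = h^0 = \chi$ with the standard lower bound $\dim V \geq \chi$ coming from the local description of $V$ as the vanishing of the expected number of equations inside $|L|$.

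For part (ii), the strengthened inequality $\geq 2$ supplies one extra unit of positivity that allows genericity arguments on top of (i). For (a), the locus in $V$ where $\mu$ is forced to fail immersion at an extra point has positive codimension by a tangent-space computation that consumes exactly the additional unit; similarly for (b), the diagonal locus where two of the $Q_{i,j}$ collide is a proper Zariski-closed subset of $V$. Item (c) follows by a Bertini-style argument: the sub-locus of $V$ where a general $[C]$ is not transverse to a fixed $G$ is carved out by one further cohomological condition that is non-trivial precisely because the residual twist $N_\mu(-D - \mu^{-1}G_{\mathrm{tan}})$ still has non-negative Euler characteristic, again by the strengthened hypothesis.

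The main technical obstacle is Step 1, the identification of $T_{[C]}V$ with $H^0(\tilde{C}, N_\mu(-D))$. When $\mu$ is not yet known to be an immersion at the tangency points $Q_{i,j}$, one must carefully separate the deformations that preserve geometric genus from those that preserve tangency order, and this is where the Caporaso--Harris inductive machinery on the multiplicity data $\alpha$ (in Dedieu's refined form) is indispensable. Everything downstream, including the two Riemann--Roch computations, the Serre duality vanishing, and the three genericity arguments in (ii), is comparatively routine once Step 1 is secured.
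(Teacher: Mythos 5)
This theorem is quoted in the paper from Dedieu's work \cite{De} (building on Caporaso--Harris \cite{CH}) and is not proved there, so there is no in-paper argument to compare against; your proposal has to stand on its own as a proof of the cited result. Your numerical skeleton is correct and is indeed the standard route: $\deg N_\mu = -K_S\cdot L + 2\delta-2$, $\deg D = L\cdot T - |\alpha|$, hence $\chi(N_\mu(-D)) = -(K_S+T)\cdot L + \delta - 1 + |\alpha|$, and the Serre-duality degree count on each component shows $h^1(N_\mu(-D))=0$ exactly under hypothesis (i). The mechanism you sketch for (ii) --- that the stronger inequality lets you subtract one more point from $D$ while keeping $h^1=0$, so that any unexpected ramification point, collision of the $Q_{i,j}$, or tangency to a fixed $G$ would cut the tangent space below $\dim V$, a contradiction --- is also the right idea.

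The genuine gap is the one you name yourself and then do not close: the identification $T_{[C]}V \cong H^0(\tilde C, N_\mu(-D))$. As stated it is not even quite the right assertion: when $\mu$ is not an immersion, $N_\mu$ acquires a torsion subsheaf supported at the ramification points, and sections of that torsion do not correspond to deformations of the image curve; the correct object is the torsion-free quotient $N'_\mu$ (or the sheaf $\bar N$ of Caporaso--Harris), and even then one only gets an inclusion $T_{[C]}V \hookrightarrow H^0(N'_\mu(-D))$ a priori, with equality being precisely the content of the theorem's regularity statement. Likewise the lower bound $\dim V \geq \chi$ does not follow from ``the vanishing of the expected number of equations inside $|L|$'': the genus and tangency conditions are not presented as a transparent system of $(\rho-\delta) + (\mathcal{I}\alpha - |\alpha|)$ equations, and establishing the lower bound requires an actual deformation or degeneration construction. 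Since everything in (i) and (ii) is downstream of this identification, the proposal as written is an accurate outline of Dedieu's strategy together with correct bookkeeping, but not a proof; to make it one you would need to either reproduce the Caporaso--Harris/Dedieu analysis of $N'_\mu$ and the equigeneric--equitangential tangent space, or explicitly invoke \cite{De} for that step, which is what the paper itself does.
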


In our new setting, the surface we are focusing on is $\mathbb{F}_2$. We call $f$ the class of the members of the ruling and $e$ the special section such that $e^2=-2$.
We describe the images of the origin cutting bisections under the quotient $q:S\rightarrow\mathbb{F}_2$. 
\begin{lemma}\label{bisf2}
    $\mathcal{B}_m$ is sent to $b_m\sim 2(m+1)f+e$ and the branch locus is $e+h_2$, with $h_2\sim 6f+3e$.
\end{lemma}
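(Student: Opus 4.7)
The plan is to exploit that the Bertini involution $(-1)$ preserves every fiber of the elliptic fibration $S\to\mathbb{P}^1$, so this fibration descends to a morphism $\mathbb{F}_2\to\mathbb{P}^1$ whose generic fiber is $F_t/(-1)\cong\mathbb{P}^1$; this must be the ruling of $\mathbb{F}_2$. In particular $q^{*}(f)=F$, and for any divisors $D,D'$ on $\mathbb{F}_2$ one has the standard relation $q^{*}(D)\cdot q^{*}(D')=2\,D\cdot D'$. I will identify each image $q(C)$ for $C\in\{\mathcal{B}_m,E_9,\mathcal{H}_2\}$ by computing its intersection with $f$ and its self-intersection, and then reading off the coefficients in the basis $(e,f)$ of $\pic(\mathbb{F}_2)$. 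The one conceptual input is the dichotomy $q^{*}(q(C))=C$ or $2C$ according to whether $C$ is or is not a component of the fixed locus.

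For $b_m:=q(\mathcal{B}_m)$, the bisection $\mathcal{B}_m$ is $(-1)$-invariant (by the last lemma of Section \ref{sec3}) and is not contained in $E_9\cup\mathcal{H}_2$; hence $q|_{\mathcal{B}_m}$ is generically $2{:}1$ onto $b_m$, $b_m$ is not a component of the branch divisor, and $q^{*}(b_m)=\mathcal{B}_m$ without extra multiplicity. From the projection formula, $2(b_m\cdot f)=\mathcal{B}_m\cdot F=2$ and $2\,b_m^{2}=\mathcal{B}_m^{2}$. Using the explicit class from Example \ref{invbis}, a direct computation gives $\mathcal{B}_m^{2}=36(m+1)^{2}-32(m+1)^{2}-4m^{2}=8m+4$, so $b_m\cdot f=1$ and $b_m^{2}=4m+2$. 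Writing $b_m\sim ae+bf$, the first relation forces $a=1$ and the second forces $-2+2b=4m+2$, i.e.\ $b=2(m+1)$.

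For the branch locus, $E_9$ and $\mathcal{H}_2$ both sit inside the ramification, so the standard double cover formula gives $q^{*}(q(E_9))=2E_9$ and $q^{*}(q(\mathcal{H}_2))=2\mathcal{H}_2$. Applying the same recipe, one finds $q(E_9)\cdot f=1$ and $q(E_9)^{2}=-2$, forcing $q(E_9)=e$; and using $\mathcal{H}_2\sim 9L-3(E_1+\cdots+E_8)$ from Remark \ref{fixlocus} to get $\mathcal{H}_2^{2}=9$ and $\mathcal{H}_2\cdot F=3$, one obtains $q(\mathcal{H}_2)\cdot f=3$ and $q(\mathcal{H}_2)^{2}=18$; solving $-2a^{2}+2ab=18$ with $a=3$ gives $b=6$, so $q(\mathcal{H}_2)\sim 6f+3e$. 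The argument is essentially numerical bookkeeping once the dichotomy above is in hand, so no serious obstacle is expected.
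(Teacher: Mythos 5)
Your proposal is correct, and all the numerical checks go through ($\mathcal{B}_m^2=8m+4$, $\mathcal{H}_2^2=9$, $\mathcal{H}_2\cdot F=3$, etc.). The key inputs are the same as in the paper's proof: the ruling of $\mathbb{F}_2$ pulls back to the elliptic pencil, $q^*(f)=F$, and the image of a divisor pulls back to it with multiplicity $1$ or $2$ according to whether it lies in the ramification locus. The difference is only in how the coefficients are pinned down: the paper writes the classes directly as combinations of $F$ and $E_9$ (namely $\mathcal{B}_m\sim 2(m+1)F+2E_9=q^*(2(m+1)f+e)$ and $\mathcal{H}_2\sim 3F+3E_9=\tfrac12 q^*(6f+3e)$, using $q^*(e)=2E_9$), whereas you solve for them via the degree-two intersection formula $q^*(D)\cdot q^*(D')=2\,D\cdot D'$ together with $b_m\cdot f$, $b_m^2$, and likewise for $e$ and $h_2$. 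Your route has the small advantage of deriving $q(E_9)=e$ from the numerics (using that an irreducible curve on $\mathbb{F}_2$ with $C\cdot f=1$ and $C^2=-2$ must be the negative section) rather than asserting it, at the cost of slightly more bookkeeping; both arguments are equally rigorous.
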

\begin{proof}
Recall that the ramification locus consists of the union of $E_9$ and $\mathcal{H}_2\sim 9L-3E_1-\dots-3E_8$.
    To prove the Lemma, it is sufficient to describe the map $q^*:\pic(\mathbb{F}_2)\rightarrow\pic(S)$. The quotient map $q$ sends the cubics of the pencil to the lines of the ruling, then $q^*(f)=F\sim 3L-E_1-\dots-E_9$. More over, $E_9$ is sent to $e$ and it belongs to the branch locus, then $q^*(e)=2E_9$. Finally, $\mathcal{B}_m\sim 6(m+1)L-2(m+1)E_1-\dots-2(m+1)E_8-2mE_9$ is sent to $b_m\sim 2(m+1)f+e$ and the component of the ramification locus $\mathcal{H}_2\sim 9L-3E_1-\dots-3E_8\sim 3F+3E_9$ is sent to $6f+3e$. 
\end{proof}

The linear system $b_m$ consists of sections, thus each irreducible member is a smooth rational curve and we shall omit the genus in the notation of the Severi varieties of curves in it. 
We are interested in the images of curves of $V_{\mathcal{B}_m}^{\mathcal{H}_2}(S)$: the singular points of $B_{S,m}$ along $E_9+\mathcal{H}_2$ become tangency points between $q(B_{S,m})\sim b_m$ and $e+h_2$. For this reason, we set \begin{center}
    $\alpha=[2,4m+2]$.
\end{center} Finally, we are interested in the members of $V_{\alpha}(\mathbb{F}_2,e+h_2,b_m)$ intersecting $h_2$ transversely in two points. We call this logarithmic Severi variety $V_{b_m}^{h_2}(\mathbb{F}_2)$.

\begin{theorem}\label{distinct}
     $V_{\mathcal{B}_m}^{\mathcal{H}_2}(S)$ is nonempty of dimension 1.
\end{theorem}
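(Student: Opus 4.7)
My plan is to pass to the Bertini quotient $q \colon S \to \mathbb{F}_2 = S/(-1)$ of Proposition \ref{quot}, translate the question into a logarithmic Severi problem on $\mathbb{F}_2$, and then apply Dedieu's Theorem \ref{dedieu}. First, I would establish a bijection $V_{\mathcal{B}_m}^{\mathcal{H}_2}(S) \longleftrightarrow V_{b_m}^{h_2}(\mathbb{F}_2)$. In one direction, an origin cutting rational bisection $B \in V_{\mathcal{B}_m}^{\mathcal{H}_2}(S)$ is $(-1)$-invariant (by the last lemma of Section \ref{sec3}), so $q|_B$ is $2:1$ onto its image $C \sim b_m$ (Lemma \ref{bisf2}); by the bookkeeping of Remark \ref{severires} the $m$ nodes of $B$ on $E_9$ descend to contact-$2$ tangencies of $C$ with $e$, the $3m+2$ nodes on $\mathcal{H}_2$ descend to contact-$2$ tangencies of $C$ with $h_2$, and the two transverse intersections of $B$ with $\mathcal{H}_2$ descend to the two simple intersections of $C$ with $h_2$, placing $C$ in $V_{b_m}^{h_2}(\mathbb{F}_2)$. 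Conversely, the two simple intersections of $C$ with $e + h_2$ produce ramification for the double cover over $C$, so $q^{-1}(C)$ is irreducible with rational normalisation; a local analysis at each contact-$2$ point of $C$ with the branch divisor shows $q^{-1}(C)$ acquires exactly one node there, yielding a curve in $V_{\mathcal{B}_m}^{\mathcal{H}_2}(S)$.

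The dimension follows from a direct application of Theorem \ref{dedieu}(i) with $L = b_m$, $T = e + h_2$, $\delta = 0$ and $\alpha = [2, 4m+2]$. Using $K_{\mathbb{F}_2} = -2e - 4f$, one computes $K_{\mathbb{F}_2} + T = 2e + 2f$, so $(K_{\mathbb{F}_2} + T) \cdot b_m = -4 + 4(m+1) + 2 = 4m + 2$, while $|\alpha| = 4m+4$, and the hypothesis $-K_{\mathbb{F}_2} \cdot C - \deg \mu_* D = (4m+4) - (4m+2) = 2 \geq 2$ is comfortably satisfied. Hence $\dim V_{b_m}^{h_2}(\mathbb{F}_2) = -(4m+2) - 1 + (4m+4) = 1$, and moreover Theorem \ref{dedieu}(ii) applies, guaranteeing that the normalisation of a general member is an immersion and that the marked points are pairwise distinct.

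The main obstacle is the nonemptiness of $V_{b_m}^{h_2}(\mathbb{F}_2)$, which I would attack by induction on $m$. Given $C_{m-1} \in V_{b_{m-1}}^{h_2}(\mathbb{F}_2)$ by the inductive hypothesis and a general fiber $f_0$, the reducible curve $C_{m-1} + 2 f_0$ lies in $|b_m|$, and since the multiplicity $2$ attached to $f_0$ turns each of its $f_0 \cdot (e + h_2) = 1 + 3 = 4$ transverse intersections with the branch into a contact-$2$ point, the limit realises the tangency data $[2, 4m+2]$ while keeping the two simple intersections on $h_2$ inherited from $C_{m-1}$. A Caporaso--Harris type smoothing, made possible by the regularity granted by Theorem \ref{dedieu}(ii) and by opening up the single node of $C_{m-1} + 2 f_0$ at $C_{m-1} \cap f_0$, then deforms this configuration to an irreducible member of $V_{b_m}^{h_2}(\mathbb{F}_2)$. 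The base case $m = 0$ is an immediate transversality check: $|b_0|$ has dimension $3$, imposing $2$ contact-$2$ tangencies with $h_2$ is codimension $2$, and the two simple intersections on $h_2$ come for free from $b_0 \cdot h_2 = 6$. The delicate point is to ensure that the smoothing preserves the distribution of contact points between the two components $e$ and $h_2$ of the branch divisor, rather than allowing tangencies to migrate from $h_2$ onto $e$; this is the refined Caporaso--Harris bookkeeping adapted to the pair $(\mathbb{F}_2, e + h_2)$.
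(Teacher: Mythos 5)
Your reduction to the quotient $q\colon S\to\mathbb{F}_2$, the dictionary between $V_{\mathcal{B}_m}^{\mathcal{H}_2}(S)$ and $V_{b_m}^{h_2}(\mathbb{F}_2)$ (with the correct bookkeeping of $m$ nodes on $E_9$, $3m+2$ nodes on $\mathcal{H}_2$ and $2$ transverse points), and the dimension count via Theorem \ref{dedieu}(i)--(ii) all coincide with the paper's argument. The divergence, and the problem, is in the nonemptiness of $V_{b_m}^{h_2}(\mathbb{F}_2)$, which is the actual content of the theorem: the dimension formula is a formal consequence of Dedieu's result once a component is known to exist.

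Your nonemptiness argument has a genuine gap at both ends of the induction. For the base case, the statement that $\dim|b_0|=3$ and that two contact-$2$ conditions along $h_2$ are codimension $2$ is a dimension count, not an existence proof: it does not rule out that the two tangency conditions are dependent, or that every curve satisfying them is reducible (note $b_0-e=2f$ is effective, so reducible members containing $e$ do exist), so it does not produce a point of $V_{b_0}^{h_2}(\mathbb{F}_2)$. For the inductive step, the degeneration to the \emph{non-reduced} curve $C_{m-1}+2f_0$ is not covered by any result you invoke: Theorem \ref{dedieu}(ii) describes the general member of a component already known to be nonempty, and says nothing about smoothing a multiple fiber while converting its transverse intersections with $e+h_2$ into genuine contact-$2$ points of a nearby irreducible curve. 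This is precisely the hard step of a Caporaso--Harris recursion (made harder here because the degenerate curve is non-reduced and because $h_2$ has arithmetic genus $4$, so the standard plane or toric setups do not apply), and you acknowledge rather than close it. As it stands the existence statement is unproved.

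The paper avoids any smoothing argument by a direct construction: by \cite[Theorem 5.2]{Costa} there is, for every $m$, a section $E_m\in\mw(S)$ with $E_m\cdot E_9=m$; the two sections $E_m$ and $\boxminus E_m$ are identified by $q$, their common image $e_m=q(E_m)$ is computed to lie in $|b_m|$, and the points of $E_m\cap(E_9\cup\mathcal{H}_2)$ map to points where $e_m$ is tangent to the branch $e+h_2$. This exhibits an explicit member of the relevant logarithmic Severi variety, after which Theorem \ref{dedieu} supplies the distinctness of the tangency points and the dimension exactly as in your write-up. If you want to keep your degeneration strategy, you would need to either carry out the deformation theory of the non-reduced configuration in detail or replace $2f_0$ by a reduced comb-type degeneration for which a smoothing result is actually available; otherwise the clean fix is to substitute the paper's explicit construction for your induction.
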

\begin{proof}
    First of all, we prove that the logarithmic Severi variety $V_{b_m}^{h_2}(\mathbb{F}_2)$ of curves on $\mathbb{F}_2$ is nonempty and that its general member is tangent to $e+h_2$ in pairwise distinct points. Finally, we notice that there is a 1:1 correspondence between $V_{\mathcal{B}_m}^{\mathcal{H}_2}(S)$ and $V_{b_m}^{h_2}(\mathbb{F}_2)$. \par
    For every nonnegative integer $m\in\mathbb{Z}_+$, consider a section $E_m\in\mw(S)$ such that $E_m\cdot E_9=m$. The existence of such a section for every $m\in\mathbb{Z}_+$ is ensured by \cite[Theorem 5.2]{Costa}. Denoting by $\boxminus E_m$ the opposite section of $E_m$ with respect to the zero section $E_9$, we have that $E_m$ intersects $\boxminus E_m$ along $E_9$ or $\mathcal{H}_2$. The quotient $q$ identifies $E_m$ and $\boxminus E_m$ by construction. We have that the intersection number between $e_m:=q(E_m)$ and $e$ is \begin{center} $e_m\cdot e=\frac{1}{2}q^*(e_m)\cdot q^*(e)=\frac{1}{2}(E_m+\boxminus E_m)(2E_9)=2m$. \end{center} The curve $e_m$ belongs to $|b_m|$: indeed, $e_m$ is of the form $xf+ye$ and, since it is a section for the ruling, we have $y=1$; furthermore, as just seen, $e_m\cdot e=2m$, which implies $x=2m+2$. The intersection points between $E_m$ and $\boxminus E_m$ along $E_9+\mathcal{H}_2$ become points at which $e_m$ is tangent to $e+h_2$. This implies the nonemptiness of $V_{b_m}^{h_2}(\mathbb{F}_2)$.
    \par
    We set now $D=Q_1+\dots+Q_{4m+2}$. Every irreducible curve of $b_m$ is a section and then is smooth, so it is isomorphic to its normalization. Let $[c]$ be a general member of $V_{b_m}^{h_2}(\mathbb{F}_2)$: by construction $d:=\deg D_{|c}=4m+2$. The anti-canonical class of $\mathbb{F}_2$ is \begin{center}
        $-K_{\mathbb{F}_2}\sim |4f+2e|$.
    \end{center} We have \begin{center}
        $-K_{\mathbb{F}_2}\cdot c-d=(4f+2e)(2(m+1)f+e)-d=4m+4-(4m+2)=2$.  
    \end{center}Thus, by Theorem \ref{dedieu}(ii-b), we conclude that the points at which $[c]$ is tangent to $e$ and $h_2$ are pairwise distinct and that $[c]$ intersects $h_2$ transversely in two distinct points (distinct also from the ones at which it is tangent).\par
    By construction, the pullback of a member of $V_{b_m}^{h_2}(\mathbb{F}_2)$ belongs to $V_{\mathcal{B}_m}^{\mathcal{H}_2}(S)$. This in particular implies that $V_{\mathcal{B}_m}^{\mathcal{H}_2}(S)$ is nonempty. Furthermore, any member of $V_{\mathcal{B}_m}^{\mathcal{H}_2}(S)$ intersects $E_9$ and $\mathcal{H}_2$ in double points except for two points in which it is transverse to $\mathcal{H}_2$, so that it is sent to a member of $V_{b_m}^{h_2}(\mathbb{F}_2)$.\par
    Finally, by Theorem \ref{dedieu}(i), we have\begin{center}  $\dim(V_{\mathcal{B}_m}^{\mathcal{H}_2}(S))=\dim(V_{b_m}^{h_2}(\mathbb{F}_2))=-(K_S+T)\cdot L+\delta-1+|\alpha|=-(|-4f-2e+6f+4e|)\cdot|(2m+2)f+e|-1+4m+4=-|2f+2e|\cdot|(2m+2)f+e|+4m+3=-4m-2+4m+3=1$. \end{center}
\end{proof}

Notice that the nonemptiness of $V_{\mathcal{B}_m}^{\mathcal{H}_2}(S)$ could be also shown by using Corollary \ref{freeh2}: the image of a curve in the set of rational origin cutting bisections in $|\mathcal{B}_m|$ under the quotient $q:S\rightarrow\mathbb{F}_2$ belongs in fact to the logarithmic Severi variety $V_{b_m}^{h_2}(\mathbb{F}_2)$. As explained in the introduction, we preferred to give a purely geometrical lattice free proof of the existence of the Enriques surfaces of base change type, being independent to the one given by Hulek and Sch\"utt.

\begin{remark}\label{curve} 
    It is not surprising that $V_{\mathcal{B}_m}^{\mathcal{H}_2}(S)$ is actually a curve: for every rational elliptic surface $S$ and for every $m$, we expected that we could construct a one-dimensional family of Enriques surfaces of base change type. These curves parametrize the pairs $(Y_m,\mathcal{E}_{Y_m})$ of a $m$-special Enriques surface $Y_m$ and its genus 1 pencil $\mathcal{E}_{Y_m}$ induced by the one of $S$.
\end{remark}

We proved Theorem \ref{nodal}, of which we give a refined statement.

\begin{theorem}\label{bisecnodal}
    Let $S=\bl_{\{P_1,\dots,P_9\}}\mathbb{P}^2$ be a general rational elliptic surface. Denote by $q:S\rightarrow\mathbb{F}_2$ the quotient map over the second Hirzebruch surface $\mathbb{F}_2$ and let $C$ be a general member of $V_{b_m}^{h_2}(\mathbb{F}_2)$. Let us denote $B_{S,m}\in V_{\mathcal{B}_m}^{\mathcal{H}_2}(S)$ the preimage of $C$ under $q$ and $S_{t_0}$ and $S_{t_\infty}$ the fibers to which $B_{S,m}$ is tangent. Let now $X_m$ and $Y_m$ be the K3 surface and the Enriques surface obtained by the base change construction with $S_{t_0}$ and $S_{t_{\infty}}$ as fixed fibers. Let then $g:X_m\rightarrow S$ and $f:X_m\rightarrow Y_m$ denote the corresponding quotients. Finally, denote by $P\in\mw(X)$ the component of $g^{-1}(B_{S,m})$ identified with $\tilde{E}_9$ by $f$ and $B_{Y,m}$ the corresponding $m$-special curve on $Y$.\\ Then, $B_{Y,m}$ is nodal.
\end{theorem}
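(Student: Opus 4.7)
The plan is to trace singularities back through the chain of double covers $\mathbb{F}_2 \xleftarrow{q} S \xleftarrow{g} X_m \xrightarrow{f} Y_m$ and reduce the nodality of $B_{Y,m}$ to the fact, built into Theorem \ref{distinct}, that the tangencies of a general $C \in V_{b_m}^{h_2}(\mathbb{F}_2)$ are simple and located at pairwise distinct points. The key local datum to control is $P \cap \tilde{E}_9 \subset X_m$: if it consists of $P \cdot \tilde{E}_9 = 2m$ transverse intersection points, then $B_{Y,m} = f(P) = f(\tilde{E}_9)$ acquires exactly $m$ ordinary nodes (matching its arithmetic genus), and we are done.

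First I would perform the local analytic calculation at a tangency of $C$ with $e$. In coordinates $(t,u)$ on $\mathbb{F}_2$ with $e = \{u = 0\}$, the cover $q$ is modeled locally by $u = y^2$, so $E_9 = \{y = 0\}$. A simple tangency $C : u = \lambda(t - t_1)^2 + O((t - t_1)^3)$, $\lambda \neq 0$, pulls back to $y^2 = \lambda(t - t_1)^2 + \cdots$, which is an ordinary node of $B_{S,m}$ whose two branches $y = \pm\sqrt{\lambda}(t - t_1) + \cdots$ are transverse to $E_9$; by contrast, a tangency of multiplicity $4$ would give $y^2 = O((t - t_1)^4)$, producing a tacnode of $B_{S,m}$ on $E_9$. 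Since $C$ is general in $V_{b_m}^{h_2}(\mathbb{F}_2)$ and Theorem \ref{dedieu}(ii-b) (used as in the proof of Theorem \ref{distinct}) forces the $4m + 2$ tangency points dictated by the profile $[2, 4m + 2]$ to be pairwise distinct, none of the $m$ tangencies on $e$ can collide. Hence every singularity of $B_{S,m}$ on $E_9$ is an ordinary node.

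Next I would lift these nodes to $X_m$ through $g$. The branch locus $S_{t_0} \cup S_{t_\infty}$ of $g$ is, by construction, the pair of fibers through the two transverse intersections of $C$ with $h_2$; since all transverse and tangential intersection points of $C$ with $e + h_2$ are pairwise distinct, these two fibers miss the $m$ nodes of $B_{S,m}$ on $E_9$. Therefore $g$ is étale near each such node, and the local picture of two smooth branches meeting transversely at a point of $\tilde{E}_9$ pulls back faithfully. These branches globally distribute between $P$ and $\boxminus P$, so $P$ meets $\tilde{E}_9$ transversely at each of the $2m$ intersection points.

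Finally I would push transversality down to $Y_m$. By Lemma \ref{pide9}, $\tau$ exchanges $P$ and $\tilde{E}_9$ and hence acts on $P \cap \tilde{E}_9$; the action is free because $\tau$ is fixed point free, so the $2m$ intersection points split into $m$ orbits of size $2$. At each orbit, $f$ glues a transverse pair of local branches of $P$ and $\tilde{E}_9$ into a single ordinary double point of $B_{Y,m}$, giving $m$ nodes in total, and $B_{Y,m}$ is nodal. The main obstacle throughout is the sheer bookkeeping of singularities along the three double covers, and specifically the exclusion of tacnodes on $E_9$; this is where the choice of tangency profile $[2, 4m + 2]$ in the very definition of $V_{b_m}^{h_2}(\mathbb{F}_2)$, together with Dedieu's distinctness result, does the real work.
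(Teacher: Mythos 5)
Your proposal is correct and follows essentially the same route as the paper, which deduces nodality of $B_{Y,m}$ from the fact (established in the proof of Theorem \ref{distinct} via Theorem \ref{dedieu}(ii-b)) that a general member of $V_{b_m}^{h_2}(\mathbb{F}_2)$ has simple tangencies with $e+h_2$ at pairwise distinct points, so that $B_{S,m}$ has only simple nodes along $E_9$ and hence $P$ and $\tilde{E}_9$ meet transversely. You merely make explicit the local double-cover computations and the $\tau$-orbit bookkeeping that the paper leaves implicit in Corollary \ref{freeh2} and the discussion opening Section 5.1.
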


We show that the $m$-special curves are not unique: for every section $E$ in the Mordell-Weil group of $S$, there exists a nodal rational curve $B_{Y_E}\in Y_m$ of arithmetic genus $m$, which we will refer to also as $m$-special curve.
\begin{lemma}\label{y_e}
    Let $E\in\mw(S)$ be a section for the rational elliptic surface $S$. Then, $\tilde{E}=g^{-1}(E)\in\mw(X)$ is identified with $\tilde{E}\boxplus P$ by $\tau$ and their image $B_{Y_E}:=f(\tilde{E})=f(\tilde{E}\boxplus P)$ is a nodal rational curve of arithmetic genus $m$.
\end{lemma}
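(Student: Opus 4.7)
The plan is to split the lemma into three assertions and handle each with a short argument grounded in the fiberwise description of $\tau$ already used for $\tilde{E}_9$ in Lemma \ref{pide9}. First, I verify that $\tau(\tilde{E})=\tilde{E}\boxplus P$. Since $\tilde{E}=g^{-1}(E)$ is the pullback of a section of $S$, it is $\iota^*$-invariant and meets twin fibers in twin points. Repeating the fiberwise calculation of Lemma \ref{pide9}: for $x_t\in\tilde{E}\cap X_t$ with twin $x_{-t}\in\tilde{E}\cap X_{-t}$, one gets
\begin{equation*}
\tau(x_t)=\iota(x_t\boxminus P_t)=\iota(x_t)\boxminus\iota(P_t)=x_{-t}\boxplus P_{-t},
\end{equation*}
where the last step uses $\iota$-equivariance of the fiberwise group law (preserving the zero $\tilde{E}_9$) together with $\iota(P_t)=\boxminus P_{-t}$ for the $\iota^*$-anti-invariant $P$. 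Since $x_{-t}\boxplus P_{-t}$ is exactly the intersection of $\tilde{E}\boxplus P$ with $X_{-t}$, this gives $\tau(\tilde{E})=\tilde{E}\boxplus P$ as sections.

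Next, I compute $\rho_a(B_{Y_E})$. Because $f$ is an étale double cover and $\tilde{E}\neq\tilde{E}\boxplus P$ (as $P\neq 0$ in $\mw(X)$), we have $f^{*}B_{Y_E}=\tilde{E}+(\tilde{E}\boxplus P)$ and hence $2B_{Y_E}^{2}=(\tilde{E}+\tilde{E}\boxplus P)^{2}$ by the projection formula. The cross-term $\tilde{E}\cdot(\tilde{E}\boxplus P)$ equals $\tilde{E}_9\cdot P=2m$ via the translation-by-$\tilde{E}$ automorphism $T_{\tilde{E}}\in\aut(X)$, which sends $\tilde{E}_9\mapsto\tilde{E}$ and $P\mapsto\tilde{E}\boxplus P$ and so preserves intersection numbers. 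Together with $\tilde{E}^{2}=(\tilde{E}\boxplus P)^{2}=-2$ this yields $B_{Y_E}^{2}=2m-2$, hence $\rho_a(B_{Y_E})=m$. Rationality is immediate, since $\tilde{E}\cong\mathbb{P}^{1}$ (a section of $X\to\mathbb{P}^{1}$) and $f|_{\tilde{E}}$ is birational onto $B_{Y_E}$: two distinct points of $\tilde{E}$ could only be identified if $\tau$ stabilized them, but $\tau(\tilde{E})=\tilde{E}\boxplus P\neq\tilde{E}$ and $\tau$ is fixed-point-free.

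Finally, for nodality, the singular locus of $\tilde{E}\cup(\tilde{E}\boxplus P)\subset X$ coincides with the intersection $\tilde{E}\cap(\tilde{E}\boxplus P)$ of length $2m$. The automorphism $T_{\tilde{E}}$ is an isomorphism of ambient surfaces carrying the pair $(\tilde{E}_9,P)$ to $(\tilde{E},\tilde{E}\boxplus P)$, so it transports $\tilde{E}_9\cap P$ isomorphically onto $\tilde{E}\cap(\tilde{E}\boxplus P)$ as analytic-local schemes. By Theorem \ref{bisecnodal}, the former consists of $2m$ transverse intersection points, so does the latter. The involution $\tau$ interchanges the two components and acts without fixed points on these $2m$ nodes, which therefore pair up and descend under the étale quotient $f$ to exactly $m$ ordinary nodes of $B_{Y_E}$; elsewhere $B_{Y_E}$ is smooth, since $\tilde{E}$ is smooth and $f|_{\tilde{E}}$ is injective. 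Thus $B_{Y_E}$ is an irreducible rational curve of arithmetic genus $m$ with $m$ ordinary nodes, hence nodal. The main obstacle is the transversality assertion; rather than rerun the logarithmic Severi variety argument on $\mathbb{F}_2$ for each $E\in\mw(S)$, the translation trick cleanly reduces it to the case of $(\tilde{E}_9,P)$ already handled by Theorem \ref{bisecnodal}.
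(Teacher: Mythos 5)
Your proposal is correct and follows essentially the same route as the paper: the key step in both is the translation automorphism $\boxplus\tilde{E}\in\aut(X)$ carrying the pair $(\tilde{E}_9,P)$ to $(\tilde{E},\tilde{E}\boxplus P)$, which transports the transversality established in Theorem \ref{bisecnodal} and hence the nodality. You establish $\tau(\tilde{E})=\tilde{E}\boxplus P$ by a direct fiberwise computation rather than by first showing that $\boxplus\tilde{E}$ commutes with $\tau$ as the paper does, and you spell out the genus computation and the descent of the $2m$ transverse intersection points to $m$ nodes under the \'etale quotient, but these are elaborations of the same argument rather than a different one.
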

\begin{proof}
Let us denote by $\boxplus\tilde{E}\in\aut(X)$ the automorphism of $X$ given by the translation for $\tilde{E}$. The curve $\tilde{E}_9$ is sent to $\tilde{E}$ and $P$ to $\tilde{E}\boxplus P$ by the translation $\boxplus\tilde{E}$. To complete the proof of the first part of the statement, it is sufficient to show that $\boxplus\tilde{E}$ commutes with $\tau$. Let $Q_t\in X_t$: $\boxplus \tilde{E}(\tau(Q_t))=\boxplus\tilde{E}(Q_{-t}\boxplus P_{-t})=Q_{-t}\boxplus P_{-t}\boxplus\tilde{E}_{-t}=\iota(Q_t+\tilde{E}_t-P_t)=\tau(Q_t+\tilde{E}_t)=\tau(\boxplus\tilde{E}(Q_t))$. Since $\tilde{E}+\tilde{E}\boxplus P$ is obtained by translating $E_9+P$ for $\tilde{E}$ and the transversality of the corresponding intersections is preserved under the automorphism $\boxplus\tilde{E}$, we conclude that $B_{Y_E}$ is nodal.
\end{proof}

\subsection{\rm O\sc ther nodal rational curves}

We are able to find other rational curves on $Y_m$. Recall that the Mordell-Weil group of $X$ is generated by $\tilde{E}_i$, with $i\in\{1,\dots,8\}$, and $P$, with neutral element $\tilde{E}_9$. Then, every section in $\mw(X)$ is of the form $\tilde{E}\boxplus P^{\boxplus k}$, with $E\in\mw(S)$.

\begin{lemma}\label{idsections}
    Every section $\tilde{E}\boxplus P^{\boxplus k}\in\mw(X_m)$ is identified by $\tau$ with $\tilde{E}\boxplus P^{\boxplus(1-k)}\in\mw(X_m)$. In particular, the section $P^{\boxplus k}$ is identified with $P^{\boxplus (1-k)}$.
\end{lemma}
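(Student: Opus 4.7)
The plan is to extend the fiberwise computation already performed in the proof of Lemma \ref{pide9} (which corresponds to the case $\tilde{E}=\tilde{E}_9$ and $k=0,1$) to an arbitrary $\tilde{E}\in\mw(S)$ and an arbitrary $k\in\mathbb{Z}$. All the ingredients are available: the Enriques involution factors as $\tau=\iota\circ(\boxminus P)$; the deck transformation $\iota$ exchanges the twin fibers $X_t$ and $X_{-t}$ and preserves the zero section $\tilde{E}_9$, so its restriction $\iota\colon X_t\to X_{-t}$ is an isomorphism of elliptic curves sending origin to origin, hence a group homomorphism; finally, $\tilde{E}$ is $\iota^*$-invariant while $P$ is $\iota^*$-anti-invariant, which fiberwise reads $\iota(\tilde{E}_t)=\tilde{E}_{-t}$ and $\iota(P_t)=\boxminus P_{-t}$.

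First I would fix a general $t\in\mathbb{P}^1$ and set $Q_t:=\tilde{E}_t\boxplus P_t^{\boxplus k}\in X_t$, the intersection of the section $\tilde{E}\boxplus P^{\boxplus k}$ with the fiber $X_t$. Unwinding the definition of $\tau$ and exploiting the fact that $\iota$ is a fiberwise group homomorphism, I would compute
\begin{center}
$\tau(Q_t)=\iota\bigl(Q_t\boxminus P_t\bigr)=\iota\bigl(\tilde{E}_t\boxplus P_t^{\boxplus(k-1)}\bigr)=\iota(\tilde{E}_t)\boxplus\iota(P_t)^{\boxplus(k-1)}=\tilde{E}_{-t}\boxplus(\boxminus P_{-t})^{\boxplus(k-1)}=\tilde{E}_{-t}\boxplus P_{-t}^{\boxplus(1-k)}.$
\end{center}
The right-hand side is precisely the intersection of $\tilde{E}\boxplus P^{\boxplus(1-k)}$ with the twin fiber $X_{-t}$, so $\tau$ identifies the two sections. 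Specializing to $\tilde{E}=\tilde{E}_9$ yields the second assertion.

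There is no serious obstacle. The only subtlety is bookkeeping the sign conventions (the composition order in $\tau=\iota\circ(\boxminus P)$ versus $(\boxminus P)\circ\iota$, and the observation that $\iota$ restricts to a group homomorphism on each fiber precisely because it preserves $\tilde{E}_9$). Once these are fixed, the identity $\tilde{E}\boxplus P^{\boxplus k}\mapsto \tilde{E}\boxplus P^{\boxplus(1-k)}$ reduces to the one-line fiberwise calculation above. As a sanity check, the case $k=0$ recovers the identification of $\tilde{E}$ with $\tilde{E}\boxplus P$ established in Lemma \ref{y_e}, and the case $\tilde{E}=\tilde{E}_9$, $k=0$ recovers the identification of $\tilde{E}_9$ with $P$ from Lemma \ref{pide9}.
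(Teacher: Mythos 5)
Your proposal is correct and follows essentially the same route as the paper: the paper's proof likewise unwinds $\tau=\iota\circ(\boxminus P)$ fiberwise, computing $\tau(x_t)=\iota(x_t\boxminus P_t)=x_{-t}\boxplus P_{-t}$ and applying it to $P^{\boxplus k}_t$, while deferring the translation by $\tilde{E}$ to the argument of Lemma \ref{y_e}. Your version merely merges the two steps into a single computation using the observation that $\iota$ is a fiberwise group homomorphism, which is a harmless streamlining of the same idea.
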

\begin{proof}
    The proof is quite similar to the one of Lemma \ref{y_e}. It remains to show that $P^{\boxplus k}$ is identified with $P^{\boxplus (1-k)}$. 
    Let $X_t$ and $X_{-t}$ be two twin fibers. The involution $\tau$ acts on a point $x_t\in X_t$ in the following way:\begin{center}
        $\tau(x_t)=\iota\circ\boxminus P(x_t)=\iota(x_t\boxminus P_t)=x_{-t}\boxplus P_{-t}$.
    \end{center}A point $P^{\boxplus k}_t\in P^{\boxplus k}$ is sent to $\iota(P^{\boxplus(k-1)}_t)=P^{\boxplus (1-k)}_{-t}\in P^{\boxplus (1-k)}$, while a point $P^{\boxplus (1-k)}_t\in P^{\boxplus (1-k)}$ is sent to $\iota(P^{\boxminus k}_t)=P^{\boxplus k}_{-t}$.
\end{proof}
\begin{definition}
    We call $B_{E,k,m}$ the image $f(\tilde{E}\boxplus P^{\boxplus k})=f(\tilde{E}\boxplus P^{\boxplus(1-k)})\subset Y_m$
\end{definition}
\begin{proposition}\label{arit}
    For every $k\in\mathbb{Z_+}$ and for every $E\in\mw(S)$, the curve $B_{E,k,m}\subset Y_m$ is rational of arithmetic genus \begin{center}
        $\rho_a(B_{E,k,m})=(4k^2-4k+1)m+4k^2-4k$.
    \end{center}
\end{proposition}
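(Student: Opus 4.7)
The plan is to pull $B_{E,k,m}$ back to the K3 cover $X_m$, reduce the resulting self-intersection computation to a single pairing involving the zero section via translation in the Mordell--Weil group, and then read the answer off Shioda's height formula.

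By Lemma \ref{idsections}, the Enriques involution $\tau$ identifies $A:=\tilde E\boxplus P^{\boxplus k}$ with $A':=\tilde E\boxplus P^{\boxplus(1-k)}$, so $f^{*}B_{E,k,m}=A+A'$. Both $A$ and $A'$ are sections of the elliptic K3 $X_m$, hence smooth rational $(-2)$-curves, and therefore
$$2B_{E,k,m}^{2}=(A+A')^{2}=A^{2}+A'^{2}+2A\cdot A'=-4+2A\cdot A'.$$
The task is thus reduced to computing $A\cdot A'$. Fiberwise translation by $\boxminus A'$ is an automorphism of $X_m$ and so preserves intersection numbers; it sends $A'$ to $\tilde E_9$ and $A$ to $A\boxminus A'=P^{\boxplus(2k-1)}$, whence
$$A\cdot A'=P^{\boxplus(2k-1)}\cdot\tilde E_9.$$

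For the second step I would invoke Shioda's height formula. Since $S$ is general and the branch points $t_0,t_\infty$ of the base change $g$ lie over smooth fibers of $S$, every singular fiber of $X_m$ is of type $I_1$; all local correction terms therefore vanish, and the formula reduces to
$$\langle \sigma,\sigma\rangle_{h}=4+2(\sigma\cdot\tilde E_9)$$
for any section $\sigma$ of $X_m$. Applied to $\sigma=P$, the equality $P\cdot\tilde E_9=2m$ of Theorem \ref{sigmam} gives $\langle P,P\rangle_h=4(m+1)$. Bilinearity of the height pairing then yields $\langle P^{\boxplus(2k-1)},P^{\boxplus(2k-1)}\rangle_h=(2k-1)^{2}\cdot 4(m+1)$, and reading the formula in the reverse direction produces
$$P^{\boxplus(2k-1)}\cdot\tilde E_9=2(2k-1)^{2}(m+1)-2.$$

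Assembling: $B_{E,k,m}^{2}=2(2k-1)^{2}(m+1)-4$. Because $K_{Y_m}$ is $2$-torsion, $K_{Y_m}\cdot B_{E,k,m}=0$, so adjunction gives $\rho_{a}(B_{E,k,m})=\tfrac12 B_{E,k,m}^{2}+1=(2k-1)^{2}(m+1)-1=(4k^{2}-4k+1)m+4k^{2}-4k$. Rationality is immediate, since $A\cong\mathbb{P}^1$ and $f|_{A}\colon A\to B_{E,k,m}$ is a finite birational morphism, hence the normalization, so $B_{E,k,m}$ has geometric genus zero. The main delicate point in this plan is the vanishing of the local corrections in Shioda's formula: this is what forces the genericity hypotheses (all fibers of $S$ of type $I_1$, branch points over smooth fibers) into the argument. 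Once these are invoked, the rest is formal bilinearity of the Mordell--Weil height pairing.
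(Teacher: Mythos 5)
Your proposal is correct, and the reduction $2B_{E,k,m}^{2}=(A+A')^{2}=-4+2\,P^{\boxplus(2k-1)}\cdot\tilde E_9$ via translation by $\boxminus A'$ is exactly the paper's first step (you are in fact slightly more careful than the paper, which silently drops $\tilde E$ and works with $P^{\boxplus k}+P^{\boxplus(1-k)}$, implicitly appealing to the same translation invariance). Where you genuinely diverge is in the evaluation of $P^{\boxplus(2k-1)}\cdot\tilde E_9$. The paper observes that $P^{\boxplus(2k-1)}$ meets the zero-section precisely over the fibers where $P$ is a (possibly trivial) $(2k-1)$-torsion point, so that $P^{\boxplus(2k-1)}\cdot\tilde E_9=P\cdot(\tilde E_9+\tilde{\mathcal H}_{2k-1})$, and then computes $P\cdot\tilde{\mathcal H}_{2k-1}$ by pushing down to $S$ and pairing $B_{S,m}$ with the explicit class $\mathcal H_{2k-1}\sim 12k(k-1)L-4k(k-1)(E_1+\dots+E_8)$ from Proposition \ref{linearclassmult}. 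You instead invoke Shioda's height formula together with the quadraticity of the height pairing; since all fibers of $X_m$ are irreducible ($S$ general with twelve $I_1$ fibers and $t_0,t_\infty$ over smooth fibers), the local corrections do vanish and your computation is sound, reproducing $P^{\boxplus(2k-1)}\cdot\tilde E_9=(8k^2-8k+2)m+8k^2-8k$. Two trade-offs are worth noting. First, your route reintroduces the Mordell--Weil lattice machinery that the paper is deliberately trying to avoid, whereas the paper's route exercises its own computation of the torsion multisection classes (one of its advertised applications). Second, and more substantively, the paper's argument does not merely count the intersection $P^{\boxplus(2k-1)}\cdot\tilde E_9$ but locates it on $\tilde E_9+\tilde{\mathcal H}_{2k-1}$; this geometric description of where the nodes of $B_{E,k,m}$ sit is precisely what the subsequent proof of Theorem \ref{nodalk} (transversality of $B_{S,m}$ with $E_9+\mathcal H_{2k-1}$ via Theorem \ref{dedieu}) relies on, and your height-pairing computation does not by itself supply it. Your treatment of rationality ($f|_A$ finite birational from $A\cong\mathbb P^1$) matches the paper's one-line remark and is fine.
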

\begin{proof}
First of all, since $P^{\boxplus k}$ is rational, then its image is. We have that \begin{center} $\rho_a(B_{E,k,m})=\frac{1}{2}B_{E,k,m}^2+1=\frac{1}{2}(\frac{1}{2}(P^{\boxplus k}+P^{\boxplus (1-k)})^2)=$\\ =$\frac{1}{4}(-2-2+2P^{\boxplus k}\cdot P^{\boxplus (1-k)})=\frac{1}{2}P^{\boxplus k}\cdot P^{\boxplus (1-k)}$. \end{center} Moreover, \begin{center}
    $P^{\boxplus k}\cdot P^{\boxplus (1-k)}=P\cdot\tilde{E}_9+P\cdot\tilde{\mathcal{H}}_{2k-1}$,
\end{center} where $\mathcal{H}_{2k-1}$ is the $(2k-1)$-torsion multisection for the elliptic fibration on $S$ and \\$\tilde{\mathcal{H}}_{2k-1}\subset X_m$ is its preimage under $g$. Indeed, the intersection product \begin{center} $P^{\boxplus k}\cdot P^{\boxplus (1-k)}$ \end{center} is the same as \begin{center} $P^{\boxplus (2k-1)}\cdot\tilde{E}_9$ \end{center} and since $\tilde{E}_9$ is the zero-section, this latter intersection is composed by the $(2k-1)$-torsion points (trivial or not) in every fiber. By Proposition \ref{linearclassmult}, we have \begin{center}
    $\mathcal{H}_{2k-1}\sim 12k(k-1)L-4k(k-1)E_1-\dots-4k(k-1)E_8$.
\end{center} Finally, \begin{center}
    $P\cdot\tilde{E}_9+P\cdot\tilde{\mathcal{H}}_{2k-1}=(8k^2-8k+2)m+8k^2-8k$,
\end{center}from which \begin{center}
    $\rho_a(B_{E,k,m})=(4k^2-4k+1)m+4k^2-4k$.
\end{center}
\end{proof}
In order to prove Theorem \ref{nodalk}, we use once again Theorem \ref{dedieu}. 

\begin{proof}[Proof of Theorem \ref{nodalk}]
    As pointed out in the proof of Proposition \ref{arit}, the singularities of $B_{E,k,m}$ are given by the intersection between $P$ and $\tilde{E}_9+\tilde{\mathcal{H}}_{2k-1}$. Thus, proving that $B_{E,k,m}$ is nodal is the same as showing that this latter intersection in transverse, or, equivalently, that the intersection between $B_{S,m}\subset S$ and $E_9+\mathcal{H}_{2k-1}$ is transverse. Consider once again the quotient $q:S\rightarrow\mathbb{F}_2$ and denote by $h_{2k-1}$ the image of the torsion multisection $\mathcal{H}_{2k-1}$. We already proved in Theorem \ref{distinct} that for a general choice of $[c]\in V_{b_m}^{h_2}(\mathbb{F}_2)$, the bisection $B_{S,m}$ intersects $E_9$ transversely. By Theorem \ref{dedieu}(ii-c), the general member of $V_{b_m}^{h_2}(\mathbb{F}_2)$ intersects $h_{2k-1}$ transversely. By varying $k$ in $\mathbb{Z}_+$, we have the assertion.
\end{proof}
The reason why we need to relax the hypothesis of generality by requiring the very generality of $Y_m$ is the following. Since for every $k$ we have that the general member of $V_{b_m}^{h_2}(\mathbb{F}_2)$ intersects $h_{2k-1}$ transversely, then for every $k$ we are removing a finite number of choices of points of $V_{b_m}^{h_2}(\mathbb{F}_2)$. But $k$ varies in $\mathbb{Z}_+$ that is countable, so a priori we are removing a countable set of points from $V_{b_m}^{h_2}(\mathbb{F}_2)$. 
\begin{remark}
 Hulek and Sch\"utt in \cite[Lemma 4.1]{HS} prove that there is an injection $\mw(S)\hookrightarrow\aut(Y_m)$ between the group of sections of $S$ and the automorphism group of $Y_m$. In particular, the injection is induced by the map \begin{center}
    $E\mapsto\boxplus\tilde{E}\in\aut(X_m)$,
\end{center} that is well-defined since the translation $\boxplus\tilde{E}$ in $X_m$ commutes with the Enriques involution $\tau$. Denoting by $\epsilon:Y_m\rightarrow\mathbb{P}^1$ the genus 1 pencil of $Y_m$ induced by the one of the rational elliptic surface $S$, Proposition \ref{arit} and Theorem \ref{nodalk} imply that for every $k\in\mathbb{Z}_+$ the set of the nodal rational bisections $B_{E,k,m}$ for $\epsilon$ injects in $\aut(Y_m)$. In other words, the bisections $B_{E,k,m}$ form a rank 8 subgroup of $\aut(Y_m)$. We can conclude that, for every $k\in\mathbb{Z}_+$, the nodal rational bisection $B_{E,k,m}$ is unique up to automorphisms: given two bisections $B_1:=B_{E_1,k,m}$ and $B_2:=B_{E_2,k,m}$ for some $E_1$ and $E_2$ in $\mw(S)$, the automorphism of $Y_m$ sending the former to the latter is the one induced by the (translation for the) section $\tilde{E}_2\boxminus \tilde{E}_1\in\mw(X_m)$.
\end{remark}

  \end{document}